\title{Predicative Aspects of Order Theory in Univalent~Foundations}
\titlerunning{Predicative Aspects of Order Theory in UF}
\author{Tom de Jong}{University of Birmingham, United Kingdom \and \url{https://www.cs.bham.ac.uk/~txd880}}
{t.dejong@pgr.bham.ac.uk}{https://orcid.org/0000-0003-1585-3172}{}
\author{Mart\'in H\"otzel Escard\'o}{University of Birmingham, United Kingdom \and \url{https://www.cs.bham.ac.uk/~mhe}}
{m.escardo@cs.bham.ac.uk}{https://orcid.org/0000-0002-4091-6334}{}
\authorrunning{T. de Jong and M.\,H. Escard\'o}
\keywords{order theory, constructivity, predicativity, univalent foundations}
\begin{document}

\maketitle

\begin{abstract}
  We investigate predicative aspects of order theory in constructive univalent
  foundations. By~predicative and constructive, we respectively mean that we do
  not assume Voevodsky's propositional resizing axioms or excluded middle.
  Our work complements existing work on predicative mathematics by
  exploring what \emph{cannot} be done predicatively in univalent foundations.
  Our first main result is that nontrivial (directed or bounded) complete posets
  are necessarily large. That is, if such a nontrivial poset is small, then weak
  propositional resizing holds. It is possible to derive full propositional
  resizing if we strengthen nontriviality to positivity. The~distinction between
  nontriviality and positivity is analogous to the distinction between
  nonemptiness and inhabitedness.
  We prove our results for a general class of posets, which includes directed
  complete posets, bounded complete posets and sup-lattices, using a technical
  notion of a \deltacomplete{\V} poset.
  We also show that nontrivial locally small \deltacomplete{\V} posets
  necessarily lack decidable equality. Specifically, we derive weak excluded
  middle from assuming a nontrivial locally small \deltacomplete{\V} poset
  with decidable equality. Moreover, if we assume positivity instead of
  nontriviality, then we can derive full excluded middle.
  Secondly, we show that each of Zorn's lemma, Tarski's greatest fixed point
  theorem and Pataraia's lemma implies propositional resizing. Hence, these
  principles are inherently impredicative and a predicative development of order
  theory must therefore do without them.
  Finally, we clarify, in our predicative setting, the relation between the
  traditional definition of sup-lattice that requires suprema for all subsets and
  our definition that asks for suprema of all small families.
\end{abstract}

\section{Introduction}
We investigate predicative aspects of order theory in constructive univalent
foundations. By~predicative and constructive, we respectively mean that we do
not assume Voevodsky's propositional resizing
axioms~\cite{Voevodsky2011,Voevodsky2015} or excluded middle.
Our work is situated in our larger programme of developing domain theory
constructively and predicatively in univalent foundations. In~previous
work~\cite{deJongEscardo2021}, we showed how to give a constructive and
predicative account of many familiar constructions and notions in domain theory,
such as Scott's \(D_\infty\) model of untyped \(\lambda\)-calculus and the
theory of continuous dcpos. The present work complements this and other existing
work on predicative mathematics
(e.g.~\cite{AczelRathjen2010,Sambin1987,CoquandEtAl2003}) by exploring what
\emph{cannot} be done predicatively, as
in~\cite{Curi2010a,Curi2010b,Curi2015,Curi2018,CuriRathjen2012}. We do so by
showing that certain statements crucially rely on resizing axioms in the sense
that they are equivalent to them. Such arguments are important in
constructive mathematics.  For example, the constructive failure of
trichotomy on the real numbers is shown~\cite{BridgesRichman1987} by reducing it
to a nonconstructive instance of excluded middle.

Our first main result is that nontrivial (directed or bounded) complete posets
are necessarily large. In~\cite{deJongEscardo2021} we observed that all our
examples of directed complete posets have large carriers. We show here that this
is no coincidence, but rather a necessity, in the sense that if such a
nontrivial poset is small, then weak propositional resizing holds. It is
possible to derive full propositional resizing if we strengthen nontriviality to
positivity in the sense of~\cite{Johnstone1984}. The distinction between
nontriviality and positivity is analogous to the distinction between
nonemptiness and inhabitedness.
We prove our results for a general class of posets, which includes directed
complete posets, bounded complete posets and sup-lattices, using a technical
notion of a \deltacomplete{\V} poset.
We also show that nontrivial locally small \deltacomplete{\V} posets necessarily
lack decidable equality. Specifically, we can derive weak excluded middle from
assuming the existence of a nontrivial locally small \deltacomplete{\V} poset
with decidable equality. Moreover, if we assume positivity instead of
nontriviality, then we can derive full excluded middle.

Secondly, we prove that each of Zorn's lemma, Tarski's greatest fixed point theorem and
Pataraia's lemma implies propositional resizing. Hence, these principles are
inherently impredicative and a predicative development of order theory in
univalent foundations must thus forgo them.

Finally, we clarify, in our predicative setting, the relation between the
traditional definition of sup-lattice that requires suprema for all subsets and
our definition that asks for suprema of all small families. This is important in
practice in order to obtain workable definitions of dcpo, sup-lattice, etc.\
in the context of predicative univalent mathematics.

Our foundational setup is the same as in~\cite{deJongEscardo2021}, meaning that
our work takes places in intensional Martin-L\"of Type Theory and adopts the
univalent point of view~\cite{HoTTBook}. This~means that we work with the
stratification of types as singletons, propositions (or subsingletons or truth
values), sets, {1-groupoids}, etc., and that we work with univalence. At present,
higher inductive types other than propositional truncation are not needed. Often
the only consequences of univalence needed here are functional and propositional
extensionality.  An exception is \cref{sec:size-and-univalence}. Full details of
our univalent type theory are given at the start
of~\cref{sec:foundations-and-size-matters}.

\paragraph*{Related work}

Curi investigated the limits of predicative mathematics
in CZF~\cite{AczelRathjen2010} in a series of
papers~\cite{Curi2010a,Curi2010b,Curi2015,Curi2018,CuriRathjen2012}.
In particular, Curi shows (see~\cite[Theorem~4.4 and
  Corollary~4.11]{Curi2010a}, \cite[Lemma~1.1]{Curi2010b} and
  \cite[Theorem~2.5]{Curi2015}) that CZF cannot prove that various nontrivial
posets, including sup-lattices, dcpos and frames, are small. This result is
obtained by exploiting that CZF is consistent with the anti-classical
generalized uniformity principle
GUP~\cite[Theorem~4.3.5]{vandenBerg2006}.
Our related \cref{nontrivial-impredicativity,positive-impredicativity} is of a
different nature in two ways.
Firstly, our theorem is in the spirit of reverse constructive
mathematics~\cite{Ishihara2006}: Instead of showing that GUP implies that there
are no non-trivial small dcpos, we show that the existence of a non-trivial
small dcpo is \emph{equivalent} to weak propositional resizing, and that the
existence of a positive small dcpo is \emph{equivalent} to full propositional
resizing. Thus, if we wish to work with small dcpos, we are forced to assume
resizing axioms.
Secondly, we work in univalent foundations rather
than CZF.  This may seem a superficial difference, but a number of
arguments in Curi's papers~\cite{Curi2015,Curi2018} crucially rely on
set-theoretical notions and principles such as transitive set,
set-induction, weak regular extension axiom wREA, which cannot even be
formulated in the underlying type theory of univalent
foundations.
Moreover, although Curi claims that the arguments of~\cite{Curi2010a,Curi2010b}
can be adapted to some version of Martin-L\"of Type Theory, it is presently
not known whether there is any model of univalent foundations which validates
GUP.

\paragraph*{Organization}
\emph{\cref{sec:foundations-and-size-matters}}: Foundations and size matters,
including impredicativity, relation to excluded middle, univalence and closure
under embedded retracts.
\emph{\cref{sec:large-posets}}: Nontrivial and positive \deltacomplete{\V}
posets and reductions to impredicativity and excluded middle.
\emph{\cref{sec:maximal-and-fixed-points}}: Predicative invalidity of Zorn's
lemma, Tarski's fixed point theorem and Pataraia's lemma.
\emph{\cref{sec:families-and-subsets}}: Comparison of completeness w.r.t.\
families and w.r.t.\ subsets.
\emph{\cref{sec:conclusion}}: Conclusion and future work.

\section{Foundations and Size Matters}\label{sec:foundations-and-size-matters}
We work with a subset of the type theory described in~\cite{HoTTBook} and we
mostly adopt the terminological and notational conventions
of~\cite{HoTTBook}. We include \(+\)~(binary sum), \(\Pi\)~(dependent
products), \(\Sigma\)~(dependent sum), \(\Id\) (identity type), and inductive
types, including~\(\Zero\)~(empty type), \(\One\)~(type with exactly one element
\(\star : \One\)), \(\Nat\)~(natural numbers).
We assume a universe \(\U_0\) and two operations: for every universe \(\U\) a
successor universe \(\U^+\) with \(\U : \U^+\), and for every two universes
\(\U\) and \(\V\) another universe \(\U \sqcup \V\) such that for any universe
\(\U\), we have \(\U_0 \sqcup \U \equiv \U\) and \(\U \sqcup \U^+ \equiv
\U^+\). Moreover, \((-)\sqcup(-)\) is idempotent, commutative, associative, and
\((-)^+\) distributes over \((-)\sqcup(-)\). We write
\(\U_1 \colonequiv \U_0^+\), \(\U_2 \colonequiv \U_1^+, \dots\) and so on.  If
\(X : \U\) and \(Y : \V\), then \({X + Y} : \U \sqcup \V\) and if \(X : \U\) and
\(Y : X \to \V\), then the types \(\Sigma_{x : X} Y(x)\) and
\(\Pi_{x : X} Y(x)\) live in the universe \(\U \sqcup \V\); finally,
if~\(X : \U\) and \(x,y : X\), then \(\Id_{X}(x,y) : \U\). The type of natural
numbers \(\Nat\) is assumed to be in \(\U_0\) and we postulate that we have
copies \(\Zero_{\U}\) and \(\One_{\U}\) in every universe \(\U\). We assume
function extensionality and propositional extensionality tacitly, and univalence
explicitly when needed. Finally, we use a single higher inductive type: the
propositional truncation of a type \(X\) is denoted by \(\squash*{X}\) and we
write \(\exists_{x : X}Y(x)\) for \(\squash*{\sum_{x : X}Y(x)}\).

\subsection{The Notion of Size}
We introduce the fundamental notion of a type having a certain size and specify
the impredicativity axioms under consideration~(\cref{sec:impred-and-em}). We
also note the relation to excluded middle~(\cref{sec:impred-and-em}) and
univalence~(\cref{sec:size-and-univalence}). Finally
in~\cref{sec:size-and-retracts} we review embeddings and sections and establish
our main technical result on size, namely that having a certain size is closed
under retracts whose sections are embeddings.

\begin{definition}[Size,
   \href{https://www.cs.bham.ac.uk/~mhe/agda-new/UF-Size.html\#_has-size_}
         {\texttt{UF-Slice.html}}
    in \cite{TypeTopology}]
    A type \(X\) in a universe \(\U\) is said to \emph{have~size}~\(\V\) if it
    is equivalent to a type in the universe \(\V\). That is,
    \({X \hassize \V} \colonequiv \sum_{Y : \V} \pa*{Y \simeq X}\).
\end{definition}

\subsection{Impredicativity and Excluded Middle}
\label{sec:impred-and-em}
We consider various impredicativity axioms and their relation to (weak) excluded
middle. The definitions and propositions below may be found in
\cite[Section~3.36]{Escardo2020}, so proofs are omitted here.

\begin{definition}[Impredicativity axioms]
  \hfill
  \begin{enumerate}[(i)]
  \item By \emph{Propositional-\(\text{Resizing}_{\U,\V}\)} we mean the
    assertion that every proposition \(P\) in a universe \(\U\) has size \(\V\).
  \item The type of all propositions in a universe \(\U\) is denoted by
    \(\Omega_{\U}\). Observe that \(\Omega_{\U} : \U^+\).  We write
    \emph{\(\Omegaresizing{\U}{\V}\)} for the assertion that the type
    \(\Omega_{\U}\) has size \(\V\).
  \item The type of all \(\lnot\lnot\)-stable propositions in a universe \(\U\)
    is denoted by \(\Omeganotnot{\U}\), where a proposition \(P\) is
    \emph{\(\lnot\lnot\)-stable} if \(\lnot\lnot P\) implies \(P\).
    By \emph{\(\Omeganotnotresizing{\U}{\V}\)} we mean the assertion that the
    type \(\Omeganotnot{\U}\) has size \(\V\).
  \item For the particular case of a single universe, we write
    \(\Omegaresizingalt{\U}\) and \(\Omeganotnotresizingalt{\U}\) for the
    respective assertions that \(\Omega_{\U}\) has~size~\(\U\) and
    \(\Omeganotnot{\U}\) has~size~\(\U\).
  \end{enumerate}
\end{definition}

\begin{proposition}
  \hfill
  \begin{enumerate}[(i)]
  \item The principle \(\Omegaresizing{\U}{\V}\) implies
    \(\Propresizing{\U}{\V}\) for every two universes \(\U\) and \(\V\).
  \item The conjunction of \(\Propresizing{\U}{\V}\) and \(\Propresizing{\V}{\U}\)
  implies \(\Omegaresizing{\U}{\V^+}\) for every two universes \(\U\) and \(\V\).
  \end{enumerate}
\end{proposition}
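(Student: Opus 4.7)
For part (i), the plan is to exploit the fact that a proposition $P : \U$ is recoverable, via propositional extensionality, as the identity type $\Id_{\Omega_{\U}}(P, \One_{\U})$. Assuming \(\Omegaresizing{\U}{\V}\), we have an equivalence $e : \Omega' \simeq \Omega_{\U}$ with $\Omega' : \V$. For any proposition $P : \U$, propositional extensionality yields an equivalence $P \simeq \Id_{\Omega_{\U}}(P, \One_{\U})$: both sides are propositions, and $P = \One_{\U}$ in $\Omega_{\U}$ iff $P \simeq \One$ iff $P$ holds. Transporting across $e^{-1}$ then gives $P \simeq \Id_{\Omega'}(e^{-1}(P), e^{-1}(\One_{\U}))$, whose right-hand side is an identity type in $\Omega' : \V$ and so lives in $\V$. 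Hence $P \hassize \V$.

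For part (ii), the plan is to construct an equivalence $\Omega_{\U} \simeq \Omega_{\V}$; since $\Omega_{\V} : \V^+$, this yields \(\Omegaresizing{\U}{\V^+}\). The key preliminary step is to verify that, for any proposition $P$, the type $P \hassize \V = \sum_{Q : \V}(Q \simeq P)$ is itself a proposition: given two witnesses $(Q_1, f_1)$ and $(Q_2, f_2)$, both $Q_i$ are propositions equivalent to $P$, hence to each other, so $Q_1 = Q_2$ by propositional extensionality in $\V$, and equivalences between propositions are unique. With this in hand, $\Propresizing{\U}{\V}$ defines a genuine function $\phi : \Omega_{\U} \to \Omega_{\V}$ picking out the small companion of each $P$; symmetrically, $\Propresizing{\V}{\U}$ gives $\psi : \Omega_{\V} \to \Omega_{\U}$. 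To check that these are mutually inverse, I would observe that $\psi(\phi(P))$ is a proposition equivalent to $\phi(P)$, which in turn is equivalent to $P$, and conclude $\psi(\phi(P)) = P$ by propositional extensionality in $\U$; the other composition is symmetric.

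The main obstacle is the conceptual move in part (i): recognising that a proposition $P$ can be re-presented as the identity type with $\One_{\U}$ inside $\Omega_{\U}$, so that a size equivalence on $\Omega_{\U}$ transfers directly to $P$. Part (ii) is more routine once one notices that $P \hassize \V$ is propositional for propositions $P$; without this observation, $\Propresizing{\U}{\V}$ would only assert mere existence of small copies, whereas here it supplies the functional data needed to define $\phi$ and $\psi$ explicitly.
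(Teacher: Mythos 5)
Your proposal is correct, and it is essentially the argument the paper points to (it omits the proof, citing Section~3.36 of Escard\'o's notes): part~(i) via \(P \simeq \Id_{\Omega_{\U}}(P,\One_{\U})\) transported along the small copy of \(\Omega_{\U}\), and part~(ii) by building the equivalence \(\Omega_{\U}\simeq\Omega_{\V}\) from the two resizing maps using propositional extensionality. One small remark: since the paper's \emph{has size} is untruncated data, \(\Propresizing{\U}{\V}\) already hands you the maps \(\phi\) and \(\psi\) directly, so your (correct) observation that \(P \hassize \V\) is a proposition for propositions \(P\) is not strictly needed for the construction, though it does show the hypothesis is property rather than data without invoking univalence.
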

It is possible to define a weaker variation of propositional resizing for
\(\lnot\lnot\)-stable propositions only (and derive similar connections), but we
don't have any use for it in this paper.

\begin{definition}[(Weak) excluded middle]
  \hfill
  \begin{enumerate}[(i)]
  \item \emph{Excluded middle} in a universe \(\U\) asserts that for every
    proposition \(P\) in \(\U\) either \(P\)~or~\(\lnot P\) holds.
  \item \emph{Weak excluded middle} in a universe \(\U\) asserts that for every
    proposition \(P\) in \(\U\) either \(\lnot P\) or \(\lnot\lnot P\) holds.
  \end{enumerate}
\end{definition}
We note that weak excluded middle says precisely that \(\lnot\lnot\)-stable
propositions are decidable and is equivalent to de~Morgan's Law.

\begin{proposition}
  Excluded middle implies impredicativity. Specifically,
  \begin{enumerate}[(i)]
  \item Excluded middle in \(\U\) implies \(\Omegaresizing{\U}{\U_0}\).
  \item Weak excluded middle in \(\U\) implies
    \(\Omeganotnotresizing{\U}{\U_0}\).
  \end{enumerate}
\end{proposition}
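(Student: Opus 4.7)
The plan is to show that under (weak) excluded middle in $\U$, the type of (\(\lnot\lnot\)-stable) propositions in $\U$ collapses to the two-element type $\mathbf{2} \colonequiv \One_{\U_0} + \One_{\U_0}$, which lives in $\U_0$ and therefore witnesses $\Omegaresizing{\U}{\U_0}$ or $\Omeganotnotresizing{\U}{\U_0}$, respectively.

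For (i), I would construct an equivalence $\mathbf{2} \simeq \Omega_\U$ by sending the two canonical elements of $\mathbf{2}$ to the canonical false and true propositions $\Zero_\U$ and $\One_\U$. For the inverse $\Omega_\U \to \mathbf{2}$, given $P : \Omega_\U$, excluded middle supplies a term of $P + \lnot P$; since $P$ is a proposition, so is $P + \lnot P$, and hence this case analysis defines an honest function into $\mathbf{2}$, sending the first summand to the true element and the second to the false one. Verifying that both composites are the identity is routine: one direction is a case split on $\mathbf{2}$, and the other uses propositional extensionality to identify an arbitrary $P$ with $\One_\U$ or with $\Zero_\U$ via the logical equivalences supplied by the two branches of EM.

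For (ii), the same argument applies almost verbatim after restricting attention to $\Omeganotnot{\U}$ and invoking weak excluded middle, which yields $\lnot P + \lnot\lnot P$ for each $\lnot\lnot$-stable $P$. In the second summand, $\lnot\lnot$-stability converts $\lnot\lnot P$ into $P$, so we can once again decide whether $P$ is true or false and map into $\mathbf{2}$ accordingly; again, both composites follow from propositional extensionality.

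There is no real obstacle here: the content of the proof is essentially the one-line observation that (weak) EM forces $\Omega_\U$ (respectively $\Omeganotnot{\U}$) to be two-valued. The only subtlety worth flagging is the indispensable use of propositional extensionality to promote each logical equivalence $P \leftrightarrow \One_\U$ or $P \leftrightarrow \Zero_\U$ to an actual identification in $\Omega_\U$, and hence to a genuine type-theoretic equivalence as demanded by the definition of having size $\U_0$.
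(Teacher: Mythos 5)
Your proof is correct and follows essentially the same route as the paper's (which is omitted there with a reference to \cite[Section~3.36]{Escardo2020}): under excluded middle every proposition is identified, via propositional extensionality, with \(\Zero_\U\) or \(\One_\U\), so \(\Omega_{\U}\) is equivalent to a two-element type in \(\U_0\), and the same collapse works for the \(\lnot\lnot\)-stable propositions under weak excluded middle using \(\lnot\lnot\)-stability to decide \(P\). The only point you gloss over --- that equality in the subtype of \(\lnot\lnot\)-stable propositions reduces to equality of the underlying propositions because \(\lnot\lnot\)-stability is itself a proposition --- is routine and does not affect correctness.
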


\subsection{Size and Univalence}
\label{sec:size-and-univalence}
Assuming univalence we can prove that \(\Propresizing{\U}{\V}\) and
\(\Omegaresizing{\U}{\V}\) are subsingletons. More generally, univalence allows
us to prove that the statement that \(X\) has size \(\V\) is a proposition,
which is needed in \cref{sec:unspecified}.

\begin{proposition}[cf.\ \texttt{has-size-is-subsingleton} in \cite{Escardo2020}]
  \label{has-size-is-prop}
  If \(\V\) and \(\U \sqcup \V\) are univalent universes, then \(X \hassize \V\)
  is a proposition for every \(X : \U\).
\end{proposition}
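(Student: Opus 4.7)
The plan is to show that any two witnesses $(Y_1, e_1), (Y_2, e_2) : X \hassize \V$ are equal in $\sum_{Y : \V}(Y \simeq X)$; this suffices for propositionality. The strategy is the familiar one: use univalence of $\V$ to convert the composite equivalence $Y_1 \simeq Y_2$ into a path in $\V$, and then verify the coherence condition on the second components via a transport computation in the fibration $Y \mapsto (Y \simeq X)$, whose codomain lives in $\U \sqcup \V$.

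First, I would form $f \colonequiv e_2^{-1} \circ e_1 : Y_1 \simeq Y_2$ and, since $Y_1, Y_2 : \V$, apply univalence of $\V$ to obtain a path $p : Y_1 =_{\V} Y_2$ with $\mathrm{idtoequiv}(p) = f$. By the standard characterisation of identities in $\Sigma$-types, to conclude $(Y_1, e_1) = (Y_2, e_2)$ it remains to produce an identification of the transported equivalence $\mathrm{transport}^{Y \mapsto (Y \simeq X)}(p, e_1)$ with $e_2$ in the type $Y_2 \simeq X$.

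For the transport computation, one has in general that transporting an equivalence $e : Y_1 \simeq X$ along $p : Y_1 = Y_2$ in the fibration $Y \mapsto (Y \simeq X)$ yields $e \circ \mathrm{idtoequiv}(p)^{-1}$. Applied here,
\[
  \mathrm{transport}^{Y \mapsto (Y \simeq X)}(p, e_1) = e_1 \circ \mathrm{idtoequiv}(p)^{-1} = e_1 \circ f^{-1} = e_1 \circ e_1^{-1} \circ e_2 = e_2,
\]
using that $\mathrm{idtoequiv}$ sends path inverses to inverse equivalences and that $e_1 \circ e_1^{-1}$ is the identity equivalence; the final equality in the equivalence type is obtained via function extensionality.

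The main obstacle is the careful bookkeeping of universes: $p$ lives in $\V$, but the fibration $Y \mapsto (Y \simeq X)$ lands in $\U \sqcup \V$. This is precisely why both univalence hypotheses are needed — univalence of $\V$ produces $p$, while univalence of $\U \sqcup \V$ (through the function extensionality it implies and the required coherences for $\mathrm{idtoequiv}$ on types in $\U \sqcup \V$) powers the transport computation on the equivalence type. The remaining groupoid-level identities for $\mathrm{idtoequiv}$ with respect to composition and inversion are standard and pose no conceptual difficulty.
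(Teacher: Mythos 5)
Your argument is correct: the identity characterisation of \(\Sigma\)-types, univalence of \(\V\) to turn \(e_2^{-1}\circ e_1\) into a path \(p : Y_1 = Y_2\), and the transport computation \(\mathrm{transport}^{Y \mapsto (Y \simeq X)}(p,e_1) = e_1 \circ \mathrm{idtoequiv}(p)^{-1}\) (proved by path induction) do combine to give \((Y_1,e_1) = (Y_2,e_2)\), and the remaining identifications of equivalences only need that being an equivalence is a proposition together with function extensionality for functions \(Y_2 \to X\), which is exactly where univalence of \(\U \sqcup \V\) enters (the instance \(\mathsf{funext}\) for a domain in \(\V\) and codomain in \(\U\) is extracted from univalence of \(\V\) and \(\U\sqcup\V\) by the standard cross-universe argument, with a small detour through lifting; you flag this bookkeeping, which is the only genuinely delicate point). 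Note that the paper itself does not spell out a proof but defers to the cited Agda development, where the result is obtained by a slightly different decomposition: there one uses univalence of \(\U\sqcup\V\) to rewrite \(Y \simeq X\) as an equality of lifted types, so that \(X \hassize \V\) becomes a fibre of the lifting map from the universe \(\V\) into \(\U\sqcup\V\), and then shows that this map is an embedding between univalent universes, so its fibres are propositions. That route packages the same univalence hypotheses into a reusable lemma about universe embeddings; your direct two-witness computation is more elementary and self-contained, at the cost of handling the transport and funext details by hand.
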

The converse also holds in the following form.
\begin{proposition}\label{has-size-univalence}
  The type \(X \hassize \U\) is a proposition for
  every \(X : \U\) if and only if \(\U\) is a univalent universe.
\end{proposition}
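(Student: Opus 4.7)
The plan is to prove the two directions of the biconditional separately.

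For the backward direction (univalence of $\U$ implies that $X \hassize \U$ is a proposition for every $X : \U$), I would simply apply \cref{has-size-is-prop} with $\V \colonequiv \U$. Since $\U \sqcup \U \equiv \U$, the hypothesis of \cref{has-size-is-prop}---that both $\V$ and $\U \sqcup \V$ are univalent universes---collapses to the single assumption that $\U$ is univalent, and the conclusion follows immediately.

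For the forward direction, I would unfold the definition to $X \hassize \U \colonequiv \sum_{Y : \U}(Y \simeq X)$ and observe that this type is always inhabited by the pair consisting of $X$ with the identity equivalence on $X$. Since a pointed proposition is contractible, the hypothesis strengthens to contractibility of $\sum_{Y : \U}(Y \simeq X)$ for every $X : \U$. Composing fibrewise with the self-equivalence on equivalence types given by inversion, I obtain that $\sum_{Y : \U}(X \simeq Y)$ is contractible for every $X : \U$. The proof then reduces to the standard characterization that $\U$ is univalent if and only if $\sum_{Y : \U}(X \simeq Y)$ is contractible for every $X : \U$. The substantive half of this characterization uses that the canonical map $\Id_{\U}(X,Y) \to (X \simeq Y)$ is a fibrewise transformation from the always-contractible total space $\sum_{Y : \U}\Id_{\U}(X,Y)$ to $\sum_{Y : \U}(X \simeq Y)$; a fibrewise map between type families with contractible total spaces is itself a fibrewise equivalence, which is precisely what univalence at $\U$ asserts.

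The only step with any conceptual weight is this characterization of univalence in terms of the contractibility of the ``singleton-up-to-equivalence'' $\sum_{Y : \U}(X \simeq Y)$. It is standard in univalent type theory, so I do not anticipate a real obstacle; the overall argument is essentially a direct synthesis of \cref{has-size-is-prop} with this known reformulation.
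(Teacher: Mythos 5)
Your proof is correct and follows essentially the same route as the paper: the paper simply unfolds \(X \hassize \U\) to \(\sum_{Y:\U}(Y \simeq X)\) and cites the standard characterization of univalence via contractibility of these ``singletons up to equivalence'' (Escard\'o's notes, Section~3.14), which is exactly the argument you spell out, with the backward direction additionally obtainable as you note from \cref{has-size-is-prop} at \(\V \colonequiv \U\).
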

\begin{proof}
  Note that \({X \hassize \U}\) is \({\sum_{Y : \U}Y\simeq X}\), so this can be
  found in \cite[Section~3.14]{Escardo2020}. \qedhere
\end{proof}

\subsection{Size and Retracts}
\label{sec:size-and-retracts}
We show our main technical result on size here, namely that having a size is
closed under retracts whose sections are embeddings.

\begin{definition}[Sections, retractions and embeddings]
  \hfill
  \begin{enumerate}[(i)]
  \item A \emph{section} is a map \(s : X \to Y\) together with a left inverse
    \(r : Y \to X\), i.e.\ the maps satisfy \(r \circ s \sim \id\).  We call
    \(r\) the \emph{retraction} and say that \(X\) is a \emph{retract} of \(Y\).
  \item A function \(f : X \to Y\) is an embedding if the map
    \(\ap_f : \pa*{x = y} \to \pa*{f(x) = f(y)}\) is an equivalence for every
    \(x,y : X\). (See~\cite[Definition~4.6.1(ii)]{HoTTBook}.)
  \item A \emph{section-embedding} is a section \(s : {X \to Y}\) that moreover
    is an embedding. We also say that \(X\) is an \emph{embedded retract} of
    \(Y\).
  \end{enumerate}
\end{definition}
We recall the following facts about embeddings and sections.
\begin{lemma}\label{embeddings-sections-lemmas}
  \hfill
  \begin{enumerate}[(i)]
  \item A function \(f : X \to Y\) is an \emph{embedding} if and only if all its
    fibres are subsingletons, i.e.\
    \(\prod_{y :
      Y}\issubsingleton\pa*{\fib_f(y)}\).
    (See~\cite[Proof~of~Theorem~4.6.3]{HoTTBook}.)
  \item If every section is an embedding, then every type is a set.
    (See~\cite[Remark~3.11(2)]{Shulman2016}.)
  \item Sections to sets are embeddings.
    (See~\cite[\texttt{lc-maps-into-sets-are-embeddings}]{Escardo2020}.)
  \end{enumerate}
\end{lemma}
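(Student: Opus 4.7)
The plan is to prove the three parts in order, treating (ii) as a consequence of (i) and (iii) as a self-contained argument.

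For (i), I would use the standard path-space characterisation of $\Sigma$-types: a path $(x,p) = (x',p')$ in $\fib_f(y)$ corresponds to a pair consisting of $q : x = x'$ together with a coherence relating $\ap_f(q)$ to $p$ and $p'$. If $\ap_f$ is an equivalence then this type of pairs is contractible, showing $\fib_f(y)$ is a proposition. Conversely, if every fibre of $f$ is a proposition, the usual manipulation shows each fibre of $\ap_f : (x = y) \to (f(x) = f(y))$ is contractible, hence $\ap_f$ is an equivalence.

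For (ii), the key is to pick the right section. I would use the diagonal $\Delta : X \to X \times X$, $x \mapsto (x,x)$, whose retraction is the first projection. Under the hypothesis that every section is an embedding, $\Delta$ is an embedding, so by (i) the fibre $\fib_{\Delta}(a,b)$ is a proposition for every $a, b : X$. Unwinding, $\fib_{\Delta}(a,b) \simeq \sum_{x : X}\pa*{(x = a) \times (x = b)}$, which in turn is equivalent to $a = b$ by contracting the singleton $\sum_{x : X}(x = a)$ at $a$ and reading off the remaining equation. Thus $a = b$ is a proposition for every $a, b$, which says precisely that $X$ is a set.

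For (iii), let $s : X \to Y$ be a section with retraction $r : Y \to X$ and $Y$ a set. I first observe that $X$ is also a set, since for $x, x' : X$ the type $x = x'$ is a retract of the proposition $s(x) = s(x')$ via $\ap_s$ and the back-map constructed below. Now both $x = x'$ and $s(x) = s(x')$ are propositions, so to verify that $\ap_s$ is an equivalence it suffices to exhibit a map in the reverse direction: letting $H : r \circ s \sim \id$, the path $H(x)^{-1} \cdot \ap_r(p) \cdot H(x')$ takes $p : s(x) = s(x')$ to a path $x = x'$. The main obstacle is the calculation in (ii) identifying $\fib_{\Delta}(a,b)$ with $a = b$; the remaining arguments are routine and well documented in the cited sources.
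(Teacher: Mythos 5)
Your three arguments are all correct. Note that the paper does not prove this lemma at all: it is stated as a recollection of known facts, with the proofs delegated to the cited sources (the HoTT book for (i), Shulman for (ii), Escard\'o's notes for (iii)), so what you have done is reconstruct those proofs. Your (i) is the standard fibrewise argument and matches the cited proof of Theorem~4.6.3. Your (iii) is essentially the cited \texttt{lc-maps-into-sets-are-embeddings} argument: the retraction makes \(s\) left-cancellable via \(p \mapsto H(x)^{-1}\cdot\ap_r(p)\cdot H(x')\), the retract argument (whose path-induction check \(\varphi(\ap_s(q))=q\) you rightly call routine) shows \(X\) is a set, and then \(\ap_s\) is a bi-implication between propositions, hence an equivalence. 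For (ii), your diagonal trick --- apply the hypothesis to the section \(\Delta : X \to X \times X\) with retraction \(\fst\), then compute \(\fib_{\Delta}(a,b) \simeq \sum_{x:X}\bigl((x=a)\times(x=b)\bigr) \simeq (a=b)\) by contracting the singleton --- is a perfectly valid self-contained proof; a marginally more economical choice of section is the pointing map \(\One \to X\) at an arbitrary \(x_0 : X\) (the unique map \(X \to \One\) is always a retraction), whose fibre at \(y\) is just \(x_0 = y\), so embeddedness gives sethood with no product-path algebra. Either way the content is the same, and nothing in your write-up would fail.
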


In phrasing our results it is helpful to extend the notion of size from
types to functions.

\begin{definition}[Size (for functions),
   \href{https://www.cs.bham.ac.uk/~mhe/agda-new/UF-Size.html\#_Has-size_}
         {\texttt{UF-Slice.html}}
         in \cite{TypeTopology}]
         A function \(f : X \to Y\) is said to \emph{have size} \(\V\) if every
         fibre has size \(\V\).
\end{definition}

\begin{lemma}[cf.\
  \href{https://www.cs.bham.ac.uk/~mhe/agda-new/UF-Size.html\#_Has-size_}
  {\texttt{UF-Slice.html}} in \cite{TypeTopology}]
  \label{function-has-size-lemmas}\hfill
  \begin{enumerate}[(i)]
  \item A type \(X\) has size \(\V\) if and only if the unique map
    \(X \to \One_{\U_0}\) has size \(\V\).
  \item If \(f : X \to Y\) has size \(\V\) and \(Y\) has size \(\V\), then so
    does \(X\).
  \item If \(s : X \to Y\) is a section-embedding and \(Y\) has size \(\V\),
    then \(s\) has size \(\V\) too, regardless of the size of \(X\).
  \end{enumerate}
\end{lemma}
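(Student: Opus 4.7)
I plan to handle the three parts in order, with (iii) being the substantive one. For (i), the fiber of $X \to \One_{\U_0}$ over the single point $\star$ is $\sum_{x : X}(\star = \star)$, which is equivalent to $X$ because $\One_{\U_0}$ is contractible; hence the sole fiber has size $\V$ exactly when $X$ does. For (ii), I would use the standard equivalence $X \simeq \sum_{y : Y}\fib_f(y)$. Given $e : Y' \simeq Y$ with $Y' : \V$ witnessing that $Y$ has size $\V$, and per-fiber equivalences $\fib_f(y) \simeq F(y)$ with $F(y) : \V$ witnessing that $f$ has size $\V$, one obtains $X \simeq \sum_{y' : Y'}F(e(y'))$, which lives in $\V$ as a $\Sigma$ of $\V$-types indexed by a $\V$-type.

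For (iii), let $r : Y \to X$ satisfy $r \circ s \sim \id$. Fix $y : Y$; the fiber $\fib_s(y)$ is a subsingleton by \cref{embeddings-sections-lemmas}(i) because $s$ is an embedding. The key step is to observe that this subsingleton is logically equivalent to the identity type $s(r(y)) = y$: in one direction, $(x,p)$ gives $r(y) = r(s(x)) = x$ and hence $s(r(y)) = s(x) = y$; in the other, $q : s(r(y)) = y$ yields $(r(y), q) : \fib_s(y)$. Since $\fib_s(y)$ is already a proposition, this logical equivalence upgrades to an equivalence $\fib_s(y) \simeq \squash*{s(r(y)) = y}$. Finally, because $Y$ has size $\V$ via some $e : Y' \simeq Y$, the map $e^{-1}$ is in particular an embedding, so $\ap_{e^{-1}}$ yields $(s(r(y)) = y) \simeq (e^{-1}(s(r(y))) = e^{-1}(y))$, a type in $\V$. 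Propositional truncation preserves universe levels, so $\fib_s(y)$ has size $\V$.

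The only real obstacle is in (iii), and it is more about being careful than technically deep: one must transfer the size of $Y$ through identity types, which works because equivalences act as embeddings on paths. Notably, the size of $X$ does not appear in (iii) at all — the embedding hypothesis on $s$ makes each fiber so small (a subsingleton) that it is determined purely by data available in $Y$.
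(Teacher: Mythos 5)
Your proof is correct and takes essentially the same route as the paper: parts (i) and (ii) rest on the decomposition \(X \simeq \sum_{y : Y}\fib_f(y)\), and part (iii) rests on the equivalence \(\fib_s(y) \simeq \squash*{s(r(y)) = y}\), which the paper simply cites from Shulman's Theorem~3.10 while you reprove it inline. Your explicit transfer of the identity type along the equivalence witnessing that \(Y\) has size \(\V\) merely spells out the step the paper compresses into ``from which the claim follows''.
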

\begin{proof}
  The first two claims follow from the fact that for any map \(f : X \to Y\) we
  have an equivalence \(X \simeq \sum_{y : Y}\fib_f(y)\)
  (see~\cite[Lemma~4.8.2]{HoTTBook}). For the third claim, suppose that
  \(s : X \to Y\) an embedding with retraction \(r : Y \to X\). By the second
  part of the proof of Theorem~3.10 in~\cite{Shulman2016}, we have
  \(\fib_s(y) \simeq \squash*{s(r(y))=y}\), from which the claim follows.
\end{proof}

\begin{lemma}\label{size-retract}\hfill
  \begin{enumerate}[(i)]
  \item If \(X\) is an embedded retract of \(Y\) and \(Y\) has size \(\V\), then
    so does \(X\).
  \item If \(X\) is a retract of a set \(Y\) and \(Y\) has size \(\V\), then so
    does \(X\).
  \end{enumerate}
\end{lemma}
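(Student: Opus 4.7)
The plan is to derive both parts directly by combining the three pieces of \cref{function-has-size-lemmas} with \cref{embeddings-sections-lemmas}, so that almost no new work is needed beyond applying the right lemmas in the right order.

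For part (i), I would start from a section-embedding $s : X \to Y$ witnessing that $X$ is an embedded retract of $Y$. Since $Y$ has size $\V$, I can apply \cref{function-has-size-lemmas}(iii) to conclude that the map $s$ itself has size $\V$, i.e.\ every fibre of $s$ has size $\V$. Then, because both $s : X \to Y$ and $Y$ have size $\V$, an application of \cref{function-has-size-lemmas}(ii) immediately gives that $X$ has size $\V$.

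For part (ii), I would reduce to the situation of part (i). Given a retract $X$ of a set $Y$, the corresponding section $s : X \to Y$ lands in a set, so by \cref{embeddings-sections-lemmas}(iii) it is automatically an embedding. Hence $X$ is in fact an embedded retract of $Y$, and part~(i) applies to conclude that $X$ has size $\V$.

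There is no real obstacle here: the genuinely nontrivial work lies in \cref{function-has-size-lemmas}(iii) (which uses the fibre characterisation $\fib_s(y) \simeq \squash*{s(r(y)) = y}$ from Shulman's paper) and in \cref{embeddings-sections-lemmas}(iii); both have already been stated and referenced. The proof of the present lemma is therefore just a short assembly, and the only thing to be careful about is to cite the parts of \cref{function-has-size-lemmas} in the correct order (first (iii), to promote size of $Y$ to size of the section, and then (ii), to transfer it back to $X$).
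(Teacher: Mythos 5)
Your proof is correct and is essentially identical to the paper's: part (i) combines items (ii) and (iii) of \cref{function-has-size-lemmas} in exactly this way, and part (ii) reduces to part (i) via \cref{embeddings-sections-lemmas}(iii), since a section into a set is an embedding.
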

\begin{proof}
  The first statement follows from (ii) and (iii)
  of~\cref{function-has-size-lemmas}. The second follows from the first
  and item~(iii) of~\cref{embeddings-sections-lemmas}.
\end{proof}

\section{Large Posets Without Decidable Equality}
\label{sec:large-posets}
We show that constructively and predicatively many structures from order theory
(directed complete posets, bounded complete posets, sup-lattices) are
necessarily large and necessarily lack decidable equality. We capture these
structures by a technical notion of a \deltacomplete{\V} poset
in~\cref{sec:delta-complete-posets}. In~\cref{sec:nontrivial-and-positive} we
define when such structures are nontrivial and introduce the constructively
stronger notion of positivity. \cref{sec:retract-lemmas} and
\cref{sec:reductions} contain the two fundamental technical lemmas and the main
theorems, respectively. Finally, \cref{sec:unspecified} considers alternative
formulations of being nontrivial and positive that ensure that these notions are
properties, as opposed to data and shows how the main theorems remain valid,
assuming univalence.

\subsection{\texorpdfstring{\(\delta_{\V}\)}{delta\_V}-complete Posets}
\label{sec:delta-complete-posets}
We start by introducing a class of weakly complete posets that we call
\deltacomplete{\V} posets. The notion of a \deltacomplete{\V} poset is a
technical and auxiliary notion sufficient to make our main theorems go
through. The important point is that many familiar structures (dcpos, bounded
complete posets, sup-lattices) are \deltacomplete{\V} posets
(see~\cref{examples-of-delta-complete-posets}).

\begin{definition}[\deltacomplete{\V} poset,
  \(\delta_{x,y,P}\),
  \(\bigvee \delta_{x,y,P}\)]
  A \emph{poset} is a type \(X\) with a subsingleton-valued binary relation
  \({\below}\) on \(X\) that is reflexive, transitive and antisymmetric.  It is
  not necessary to require \(X\) to be a set, as this follows from the other
  requirements.
  A poset \((X,{\below})\) is \emph{\(\delta_\V\)-complete} for a
  universe~\(\V\) if for every pair of elements \(x,y : X\) with \(x \below y\)
  and every subsingleton \(P\) in \(\V\), the family
  \begin{align*}
    \delta_{x,y,P} : 1 + P &\to X \\
    \inl(\star) &\mapsto x; \\
    \inr(p) &\mapsto y;
  \end{align*}
  has a supremum \(\bigvee \delta_{x,y,P}\) in \(X\).
\end{definition}
% N.B. In the above definition, the carrier of the poset and the values of the
% partial order can live in arbitrary universes which may or may not be different
% from \(\V\).
%
\begin{remark}[Every poset is \deltacomplete{\V}, classically]
  \label{classically-every-poset-is-delta-complete}
  Consider a poset \((X,\below)\) and a pair of elements \(x \below y\). If
  \(P : \V\) is a decidable proposition, then we can define the supremum of
  \(\delta_{x,y,P}\) by case analysis on whether \(P\) holds or not. For if it
  holds, then the supremum is \(y\), and if it does not, then the supremum is
  \(x\). Hence, if excluded middle holds in \(\V\), then the family
  \(\delta_{x,y,P}\) has a supremum for every \(P : \V\). Thus, if excluded
  middle holds in \(\V\), then every poset (in any universe) is
  \deltacomplete{\V}.
\end{remark}
The above remark naturally leads us to ask whether the converse also holds,
i.e.\ if every poset is \deltacomplete{\V}, does excluded middle in \(\V\)
hold?  As far as we know, we can only get weak excluded middle in \(\V\), as we
will later see in~\cref{Two-is-not-delta-complete}.
This proposition also shows that in the absence of excluded middle, the notion
of \(\delta_{\V}\)-completeness isn't trivial. For now, we focus on the fact
that, also constructively and predicatively, there are many examples of
\deltacomplete{\V} posets.

\begin{examples}\hfill
  \label{examples-of-delta-complete-posets}
  \begin{enumerate}[(i)]
  \item Every \(\V\)-sup-lattices is \deltacomplete{\V}. That is, if a poset \(X\)
    has suprema for all families \(I \to X\) with \(I\) in the universe
    \(\V\), then \(X\) is \deltacomplete{\V}.
  \item The \(\V\)-sup-lattice \(\Omega_\V\) is \deltacomplete{\V}.  The type
    \(\Omega_{\V}\) of propositions in \(\V\) is a \(\V\)-sup-lattice with the
    order given by implication and suprema by existential quantification. Hence,
    \(\Omega_{\V}\) is \deltacomplete{\V}. Specifically, given propositions
    \(Q\), \(R\) and \(P\), the supremum of \(\delta_{Q,R,P}\) is given by
    \(Q \vee \pa*{R \times P}\).
  \item The \(\V\)-powerset \(\powerset_{\V}(X) \colonequiv X \to \Omega_{\V}\)
    of a type \(X\) is \deltacomplete{\V}. Note that \(\powerset_{\V}(X)\) is
    another example of a \(\V\)-sup-lattice (ordered by subset inclusion and
    with suprema given by unions) and hence \deltacomplete{\V}.
  \item Every \(\V\)-bounded complete posets is \deltacomplete{\V}. That is, if
    \((X,\below)\) is a poset with suprema for all bounded families \(I \to X\)
    with \(I\) in the universe \(\V\), then \((X,\below)\) is
    \deltacomplete{\V}.
    A family \(\alpha : I \to X\) is bounded if there exists some \(x : X\) with
    \(\alpha(i) \below x\) for every \(i : I\). For example, the family
    \(\delta_{x,y,P}\) is bounded by \(y\).
  \item Every \(\V\)-directed complete poset (dcpo) is \deltacomplete{\V}, since
    the family \(\delta_{x,y,P}\) is directed.  We note
    that~\cite{deJongEscardo2021} provides a host of examples of \(\V\)-dcpos.
  \end{enumerate}
\end{examples}

\subsection{Nontrivial and Positive Posets}
\label{sec:nontrivial-and-positive}
In \cref{classically-every-poset-is-delta-complete} we saw that if we can decide
a proposition \(P\), then we can define \(\bigvee \delta_{x,y,P}\) by case
analysis. What about the converse? That is, if \(\delta_{x,y,P}\) has a supremum
and we know that it equals \(x\) or \(y\), can we then decide \(P\)?  Of course,
if \(x = y\), then \(\bigvee \delta_{x,y,P} = x = y\), so we don't learn
anything about \(P\). But what if add the assumption that \(x \neq y\)? It turns
out that constructively we can only expect to derive decidability of \(\lnot P\)
in that case. This is due to the fact that \(x \neq y\) is a negated
proposition, which is rather weak constructively, leading us to later define
(see~\cref{def:strictly-below}) a constructively stronger notion for elements of
\deltacomplete{\V} posets.

\begin{definition}[Nontrivial]
  A poset \((X,\below)\) is \emph{nontrivial} if we have designated \(x,y : X\)
  with \(x \below y\) and \(x \neq y\).
\end{definition}

\begin{lemma}
  \label{delta-sup-weak-em}
  Let \((X,{\below},x,y)\) be a nontrivial poset. We have the
  following implications for every proposition \(P : \V\):
  \begin{enumerate}[(i)]
  \item\label{delta-sup-weak-em-1} if the supremum of \(\delta_{x,y,P}\) exists and
    \(x = \bigvee \delta_{x,y,P}\), then \(\lnot P\) is the case.
  \item\label{delta-sup-weak-em-2} if the supremum of \(\delta_{x,y,P}\) exists and
    \(y = \bigvee \delta_{x,y,P}\), then \(\lnot\lnot P\) is the case.
  \end{enumerate}
\end{lemma}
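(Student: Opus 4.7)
The plan is to derive both implications by reducing to the assumption $x \neq y$ via antisymmetry, using in each case the appropriate half of the universal property of the supremum of $\delta_{x,y,P}$.

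For part (\ref{delta-sup-weak-em-1}), I would argue contrapositively: assume a witness $p : P$ and derive $x = y$. Since $\delta_{x,y,P}(\inr(p)) = y$, the value $y$ occurs in the family, so $y \below \bigvee \delta_{x,y,P}$. By hypothesis $\bigvee \delta_{x,y,P} = x$, giving $y \below x$. Combined with $x \below y$, antisymmetry yields $x = y$, contradicting the nontriviality assumption $x \neq y$. Hence $\lnot P$.

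For part (\ref{delta-sup-weak-em-2}), I would again argue by contradiction: assume $\lnot P$ and derive $x = y$. The idea is to show that $x$ is an upper bound for the family $\delta_{x,y,P}$, using $\lnot P$ to dispose of the $\inr$-case. Concretely, for every $i : 1 + P$ I need $\delta_{x,y,P}(i) \below x$: if $i = \inl(\star)$ this is reflexivity of ${\below}$, and if $i = \inr(p)$ then $p : P$ together with $\lnot P$ yields $\Zero$, from which anything follows. Then by the universal property of the supremum, $\bigvee \delta_{x,y,P} \below x$, and the hypothesis $y = \bigvee \delta_{x,y,P}$ gives $y \below x$. Combined with $x \below y$, antisymmetry again yields $x = y$, contradicting $x \neq y$. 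Hence $\lnot\lnot P$.

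Neither step looks like it will be a real obstacle; the only mild subtlety is the asymmetry between the two parts, reflecting that using the family's supremum to \emph{detect} a witness of $P$ (part i) is directly a membership argument, whereas using it to \emph{exclude} $P$ (part ii) requires invoking the least-upper-bound property of the supremum. This asymmetry is exactly what accounts for the fact that we only obtain $\lnot\lnot P$ rather than $P$ in part (ii), and it is what will prevent strengthening the later reductions beyond weak excluded middle.
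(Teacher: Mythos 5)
Your proof is correct and follows essentially the same route as the paper's: part (i) derives a contradiction from a witness of \(P\) via \(y \below \bigvee\delta_{x,y,P} = x\), antisymmetry and \(x \neq y\), and part (ii) derives one from \(\lnot P\) by observing that the supremum then collapses to \(x\), forcing \(x = y\). The only difference is that you spell out the least-upper-bound argument that the paper leaves implicit in the equality \(x = \bigvee\delta_{x,y,P}\) under \(\lnot P\).
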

\begin{proof}
  Let \(P : \V\) be an arbitrary proposition. For (i), suppose that
  \(x = \bigvee \delta_{x,y,P}\) and assume for a contradiction that we have
  \(p : P\). Then
  \( y \equiv \delta_{x,y,P}(\inr(p)) \below \bigvee \delta_{x,y,P} = x, \)
  which is impossible by antisymmetry and our assumptions that \(x \below y\)
  and \(x \neq y\).
  For (ii), suppose that \(y = \bigvee \delta_{x,y,P}\) and assume for a
  contradiction that \(\lnot P\) holds. Then \(x = \bigvee \delta_{x,y,P} = y\),
  contradicting our assumption that \(x \neq y\). \qedhere
\end{proof}

\begin{proposition}[cf.\ Section 4 of~\cite{deJongEscardo2021}]
  \label{Two-is-not-delta-complete}
  Let \(\Two\) be the poset with exactly two elements \(0 \below 1\).
  If~\(\Two\)~is \deltacomplete{\V}, then weak excluded middle in \(\V\) holds.
\end{proposition}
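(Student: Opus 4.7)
The plan is to apply \cref{delta-sup-weak-em} to the nontrivial poset \(\Two\). Concretely, suppose \(\Two\) is \deltacomplete{\V} and let \(P : \V\) be an arbitrary proposition; we aim to show \(\lnot P \vee \lnot\lnot P\).

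First I would note that \(\Two\) is nontrivial with witnesses \(0 \below 1\) and \(0 \neq 1\), so the hypotheses of \cref{delta-sup-weak-em} are met. By \deltacompleteness{\V} applied to \(x \colonequiv 0\), \(y \colonequiv 1\) and the proposition \(P\), the family \(\delta_{0,1,P} : 1 + P \to \Two\) has a supremum \(s \colonequiv \bigvee \delta_{0,1,P}\) in \(\Two\). Since \(\Two\) has exactly two elements, the equality \(s = 0\) versus \(s = 1\) is decidable, so we may perform case analysis on \(s\). If \(s = 0\), then part~\eqref{delta-sup-weak-em-1} of \cref{delta-sup-weak-em} yields \(\lnot P\); if \(s = 1\), then part~\eqref{delta-sup-weak-em-2} yields \(\lnot\lnot P\). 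In either case we obtain \(\lnot P \vee \lnot\lnot P\), which, since \(P\) was arbitrary, is weak excluded middle in~\(\V\).

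There is no real obstacle: the work has been done in \cref{delta-sup-weak-em}, and the only ingredient specific to \(\Two\) is that it has just two elements, so that the supremum of \(\delta_{0,1,P}\) is forced to be one of the two values to which \cref{delta-sup-weak-em} applies. Note also that this proof makes essential use of the fact that we do not have to \emph{decide} \(P\) in order to form the family \(\delta_{0,1,P}\); it suffices that \(P\) is a proposition in~\(\V\), which is exactly what \deltacompleteness{\V} gives us access to.
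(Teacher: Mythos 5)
Your proof is correct and follows essentially the same route as the paper's: both apply \cref{delta-sup-weak-em} to the nontrivial poset \(\Two\), using that the supremum of \(\delta_{0,1,P}\) must be \(0\) or \(1\) because \(\Two\) has exactly two elements, and then read off \(\lnot P\) or \(\lnot\lnot P\) from the two parts of that lemma. The only difference is that you spell out the case analysis and the nontriviality witnesses slightly more explicitly than the paper does.
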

\begin{proof}
  Suppose that \(\Two\) were \deltacomplete{\V} and let \(P : \V\) be an
  arbitrary subsingleton. We must show that \(\lnot P\) is decidable. Since
  \(\Two\) has exactly two elements, the supremum \(\bigvee \delta_{0,1,P}\)
  must be \(0\) or \(1\). But then we apply \cref{delta-sup-weak-em} to get
  decidability of \(\lnot P\).
\end{proof}
That the conclusion of the implication in
\cref{delta-sup-weak-em}\eqref{delta-sup-weak-em-2} cannot be strengthened to
say that \(P\) is the case is shown by the following observation.
%
% That we can't improve \cref{delta-sup-weak-em} to decidability of \(P\) is shown
% by the following observation.
\begin{proposition}
  \label{delta-sup-em}
  Recall~\cref{examples-of-delta-complete-posets}, which show that
  \(\Omega_{\V}\) is \deltacomplete{\V}. If for every two propositions \(Q\) and
  \(R\) with \(Q \below R\) and \(Q \neq R\) we have that the equality
  \(R = \bigvee \delta_{Q,R,P}\) in \(\Omega_{\V}\) implies \(P\) for every
  proposition \(P : \V\), then excluded middle in \(\V\) follows.
\end{proposition}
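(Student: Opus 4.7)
The plan is to apply the hypothesis exactly once, for an arbitrary proposition $P_0 : \V$, choosing $Q$, $R$ and the test proposition $P$ so that all premises hold unconditionally and the extracted conclusion is precisely $P_0 \vee \neg P_0$. Specifically, I would instantiate with $Q := \Zero_\V$ and $R := P := P_0 \vee \neg P_0$.

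First, I would check the two side conditions. The inequality $Q \below R$ is immediate, since $\Zero_\V$ implies every proposition. For $Q \neq R$, I would argue by propositional extensionality that $\Zero_\V = R$ would entail $\neg R$; but $\neg(P_0 \vee \neg P_0)$ is refuted by the constructive tautology $\neg\neg(P_0 \vee \neg P_0)$ (assume $\neg(P_0 \vee \neg P_0)$; then $\neg P_0$, hence $P_0 \vee \neg P_0$, contradiction), so $Q \neq R$ holds without any assumption on $P_0$.

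Next, I would verify the equation $R = \bigvee \delta_{Q,R,P}$ using the explicit description $\bigvee \delta_{Q,R,P} = Q \vee (R \times P)$ from \cref{examples-of-delta-complete-posets}(ii). With our choices this becomes $\Zero_\V \vee \bigl((P_0 \vee \neg P_0) \times (P_0 \vee \neg P_0)\bigr) = P_0 \vee \neg P_0 = R$ by idempotence of conjunction on propositions. The hypothesis then yields $P$, i.e.\ $P_0 \vee \neg P_0$, and since $P_0$ was arbitrary, excluded middle in $\V$ follows.

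The main obstacle is finding the right instantiation: the obvious candidates $R = \One_\V$ and $R = P_0$ both lead to vacuous applications, where either $Q \neq R$ fails unconditionally or the equation $R = \bigvee \delta_{Q,R,P}$ only holds under a hypothesis already implying the desired conclusion. The key insight is that $P_0 \vee \neg P_0$ itself, viewed as an element of $\Omega_\V$, is provably not equal to $\Zero_\V$ (even though $P_0 \vee \neg P_0$ is not itself provable), and is trivially idempotent under conjunction, making it the ideal simultaneous choice for both $R$ and the test proposition $P$.
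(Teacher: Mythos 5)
Your proof is correct and is essentially the paper's argument: the paper instantiates the hypothesis with \(Q := \Zero_{\V}\), \(R := P\) and test proposition \(P\) itself under the assumption \(\lnot\lnot P\) (so that \(\Zero_{\V} \neq P\)), concluding \(\lnot\lnot P \to P\) for all \(P : \V\) and hence excluded middle, while you inline that last reduction by taking \(R = P := P_0 \vee \lnot P_0\), whose double negation is a constructive tautology. Both proofs rest on exactly the same two observations, namely that \(\Zero_{\V} \neq R\) amounts to \(\lnot\lnot R\) and that \(\bigvee \delta_{\Zero_{\V},R,R} = \Zero_{\V} \vee (R \times R) = R\) holds unconditionally.
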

\begin{proof}
  Assume the hypothesis in the proposition. We are going to show that
  \(\lnot\lnot P \to P\) for every proposition \(P : \V\), from which excluded
  middle in \(\V\) holds. Let \(P\) be a proposition in \(\V\) and assume that
  \(\lnot\lnot P\). This yields \(\Zero \neq P\), so by assumption the equality
  \(P = \bigvee \delta_{\Zero,P,P}\) implies \(P\). But, recalling item~(ii) of
  \cref{examples-of-delta-complete-posets}, we have exactly this equality
  \(\bigvee \delta_{0,P,P} = P\).
\end{proof}

We have seen that having a pair of elements \(x,y\) with \(x \below y\) and
\(x \neq y\) is very weak constructively.  As promised in the introduction of
this section, we now introduce a constructively stronger notion.

\begin{definition}[Strictly below, \(x \sbelow y\)]
  \label{def:strictly-below}
  Let \((X,\below)\) be a \deltacomplete{\V} poset and \(x,y : X\). We~say that
  \(x\) is \emph{strictly below} \(y\) if \(x \below y\) and, moreover, for
  every \(z \aboveorder y\) and every proposition \(P : \V\), the equality
  \(z = \bigvee \delta_{x,z,P}\) implies \(P\).
\end{definition}
Note that with excluded middle, \(x \sbelow y\) is equivalent to the conjunction
of \(x \below y\) and \(x \neq y\). But constructively, the former is much
stronger, as the following example and proposition illustrate.

\begin{example}[Strictly below in \(\Omega_{\V}\)]
  Recall from~\cref{examples-of-delta-complete-posets} that \(\Omega_{\V}\) is
  \deltacomplete{\V}. Let \(P : \V\) be an arbitrary proposition. Observe that
  \(\Zero_\V \neq P\) precisely when \(\lnot\lnot P\) holds. However,
  \(\Zero_\V\) is strictly below \(P\) if and only if \(P\) holds.
\end{example}

\begin{proposition}
  \label{sbelow-below-neq}
  For a \deltacomplete{\V} poset \((X,\below)\) and \(x,y : X\), we have that
  \(x \sbelow y\) implies both \(x \below y\) and \(x \neq y\). However, if the
  conjunction of \(x \below y\) and \(x \neq y\) implies \(x \sbelow y\) for
  every \(x,y : \Omega_\V\), then excluded middle in \(\V\) holds.
\end{proposition}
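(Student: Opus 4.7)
The plan is to handle the forward implication directly from the definition of $\sbelow$, and to reduce the converse to the already-established \cref{delta-sup-em}.

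For the forward direction, $x \sbelow y \Rightarrow x \below y$ is immediate by definition of $\sbelow$. To see that $x \sbelow y$ implies $x \neq y$, I would instantiate the second clause of the definition of $\sbelow$ with $z \colonequiv y$ (which satisfies $y \aboveorder y$ by reflexivity) and $P \colonequiv \Zero_\V$ (which is a proposition in $\V$). The family $\delta_{x,y,\Zero_\V} : 1 + \Zero_\V \to X$ is essentially the constant family with value $x$, since $1 + \Zero_\V$ has only the element $\inl(\star)$, and hence its supremum is $x$. The definition of $\sbelow$ therefore yields that $y = x$ implies $\Zero_\V$, i.e.\ $y \neq x$, and so $x \neq y$.

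For the converse, suppose the conjunction of $Q \below R$ and $Q \neq R$ implies $Q \sbelow R$ for every $Q, R : \Omega_\V$. I would show that the hypothesis of \cref{delta-sup-em} is satisfied, from which excluded middle in $\V$ follows. So fix propositions $Q, R : \Omega_\V$ with $Q \below R$ and $Q \neq R$, and let $P : \V$ be any proposition. By assumption we have $Q \sbelow R$, and instantiating the second clause of the definition of $\sbelow$ with $z \colonequiv R$ (which trivially satisfies $R \aboveorder R$) yields that $R = \bigvee \delta_{Q,R,P}$ implies $P$. This is precisely the hypothesis required by \cref{delta-sup-em}, so excluded middle in $\V$ holds.

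I do not expect significant obstacles: the forward direction is a direct unfolding with a well-chosen instantiation ($P = \Zero_\V$), and the converse is essentially a rephrasing of \cref{delta-sup-em}. The only subtlety worth double-checking is that $\bigvee \delta_{x,y,\Zero_\V}$ is indeed $x$, which follows because $1 + \Zero_\V$ is equivalent to $\One$ and the family then has $x$ as its unique value, hence its sole upper bound from below.
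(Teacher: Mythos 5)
Your proof is correct, and the forward direction coincides with the paper's: both instantiate the definition of \(\sbelow\) at \(z \colonequiv y\) and \(P \colonequiv \Zero_{\V}\), using that \(\bigvee \delta_{x,y,\Zero_{\V}} = x\) (the paper phrases this as a proof by contradiction from \(x = y\), you compute the supremum directly, which is the same computation). The only difference is in the converse: the paper appeals to the worked example about \(\Omega_{\V}\), namely that \(\Zero_\V \neq P\) is equivalent to \(\lnot\lnot P\) while \(\Zero_\V \sbelow P\) is equivalent to \(P\), and concludes \(\lnot\lnot P \to P\); you instead verify the hypothesis of \cref{delta-sup-em} (for all \(Q \below R\) with \(Q \neq R\), the equality \(R = \bigvee \delta_{Q,R,P}\) implies \(P\)) by instantiating \(\sbelow\) at \(z \colonequiv R\), and then invoke that proposition. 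The two routes encode the same underlying fact (that \(\bigvee \delta_{\Zero_\V,P,P} = P\) in \(\Omega_{\V}\)), so nothing is gained or lost mathematically; your version has the mild advantage of resting on a proposition that is proved rather than on an observation stated without proof, at the cost of not making the specific witnesses \(\Zero_\V \below P\) explicit.
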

\begin{proof}
  Note that \(x \sbelow y\) implies \(x \below y\) by definition. Now suppose
  that \(x \sbelow y\) and assume \(x = y\) for a contradiction. Since we
  assumed \(x \sbelow y\), the equality \(y = \bigvee \delta_{x,y,\Zero_{\V}}\)
  implies that \(\Zero_{\V}\) holds. But this equality holds since \(x = y\) by
  our other assumption, so \(x \neq y\), as desired.

  For \(P : \Omega_{\V}\) we observed that \(\Zero_\V \neq P\) is equivalent to
  \(\lnot\lnot P\) and that \(\Zero_\V \sbelow P\) is equivalent to \(P\), so if
  we had \(\pa*{\pa*{x \below y} \times \pa*{x \neq y}} \to x \sbelow y\) in
  general, then we would have \(\lnot\lnot P \to P\) for every proposition \(P\)
  in \(\V\), which is equivalent to excluded middle in \(\V\).
\end{proof}

\begin{lemma}\label{sbelow-trans}
  Let \((X,\below)\) be a \deltacomplete{\V} poset and \(x,y,z : X\). The
  following hold:
  \begin{enumerate}[(i)]
  \item If \(x \below y \sbelow z\), then \(x \sbelow z\).
  \item If \(x \sbelow y \below z\), then \(x \sbelow z\).
  \end{enumerate}
\end{lemma}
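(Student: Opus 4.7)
The plan is to prove (ii) first, because it follows almost immediately from unfolding the definition, and then to reduce (i) to an application of the strict relation $y \sbelow z$ after manipulating the family $\delta_{x,w,P}$ into the form $\delta_{y,w,P}$.

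For (ii), assume $x \sbelow y$ and $y \below z$. Transitivity of ${\below}$ gives $x \below z$. To verify the universal clause in the definition of $\sbelow$, I would take an arbitrary $w \aboveorder z$ and a proposition $P : \V$ with $w = \bigvee \delta_{x,w,P}$ and argue that $w \aboveorder y$ (from $y \below z \below w$), so that the strict relation $x \sbelow y$, applied at this $w$ and $P$, directly yields $P$.

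For (i), assume $x \below y$ and $y \sbelow z$. Again $x \below z$ is automatic, so the content is in the universal clause. I would fix $w \aboveorder z$ and a proposition $P:\V$ with $w = \bigvee \delta_{x,w,P}$, and aim to derive the analogous equality $w = \bigvee \delta_{y,w,P}$, after which $y \sbelow z$ (applied at $w$ and $P$) produces $P$. To establish this equality, I would first observe that $y \below z \below w$, so $\delta_{y,w,P}$ is well-defined and its supremum $s$ exists by $\delta_\V$-completeness. One direction, $s \below w$, is immediate because $w$ is visibly an upper bound of $\delta_{y,w,P}$. For the other direction, $w \below s$, I would show that $s$ is also an upper bound of $\delta_{x,w,P}$: on $\inl(\star)$ this uses $x \below y \below s$, and on $\inr(p)$ it uses $w = \delta_{y,w,P}(\inr(p)) \below s$; then $w = \bigvee \delta_{x,w,P} \below s$.

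The only step I expect to need care is the replacement of $\delta_{x,w,P}$ by $\delta_{y,w,P}$ in (i); the rest is transitivity, antisymmetry and unfolding of definitions. In particular, the argument uses $\delta_\V$-completeness in an essential way (to know the supremum $\bigvee \delta_{y,w,P}$ exists), which is a good sanity check that the lemma is formulated at the right level of generality.
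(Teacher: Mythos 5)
Your proof is correct and follows essentially the same route as the paper: part (ii) is the immediate unfolding of \(x \sbelow y\) at \(w \aboveorder y\), and part (i) replaces \(\delta_{x,w,P}\) by \(\delta_{y,w,P}\) via the comparison \(\bigvee \delta_{x,w,P} \below \bigvee \delta_{y,w,P} \below w\) and antisymmetry, which is exactly the paper's argument with the upper-bound details spelled out a little more explicitly.
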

\begin{proof}
  For (i), assume \(x \below y \sbelow z\), let \(P\) be an arbitrary
  proposition in \(\V\) and suppose that \(z \below w\). We must show that
  \(w = \bigvee \delta_{x,w,P}\) implies \(P\). But \(y \sbelow z\), so we know
  that the equality \(w = \bigvee \delta_{y,w,P}\) implies \(P\). Now observe
  that \(\bigvee \delta_{x,w,P} \below \bigvee \delta_{y,w,P}\), so if
  \(w = \bigvee \delta_{x,w,P}\), then \(w = \bigvee \delta_{y,w,P}\), finishing
  the proof.
  For (ii), assume \(x \sbelow y \below z\), let \(P\) be an arbitrary
  proposition in \(\V\) and suppose that \(z \below w\). We must show that
  \(w = \bigvee \delta_{x,w,P}\) implies \(P\). But \(x \sbelow y\) and
  \(y \below w\), so this follows immediately.
\end{proof}

\begin{proposition}
  \label{positive-element-equivalent}
  Let \((X,\below)\) be a \(\V\)-sup-lattice and let \(y : X\). The following
  are equivalent:
  \begin{enumerate}[(i)]
  \item the least element of \(X\) is strictly below \(y\);
  \item for every family \(\alpha : I \to X\) with \(I : \V\) and
    \(y \below \bigvee \alpha\), there exists some element \(i : I\).
  \item there exists some \(x : X\) with \(x \sbelow y\).
  \end{enumerate}
\end{proposition}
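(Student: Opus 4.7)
The plan is to split the triangle as the easy equivalence (i) $\Leftrightarrow$ (iii) plus the substantive equivalence (i) $\Leftrightarrow$ (ii). Since $X$ is a $\V$-sup-lattice it carries a least element $\bot$ (the supremum of the empty family indexed by $\Zero_\V$), so (i) $\Rightarrow$ (iii) is immediate by taking $x \colonequiv \bot$. For (iii) $\Rightarrow$ (i), I would extract a witness $x \sbelow y$---legitimate because $\bot \sbelow y$ is a proposition---and combine $\bot \below x$ with $x \sbelow y$ via \cref{sbelow-trans}(i) to conclude $\bot \sbelow y$.

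For (i) $\Rightarrow$ (ii), fix a family $\alpha : I \to X$ with $y \below \bigvee \alpha$ and set $z \colonequiv \bigvee \alpha$ and $P \colonequiv \squash*{I}$, noting that $z \aboveorder y$. The idea is to verify the equality $z = \bigvee \delta_{\bot,z,P}$ and then invoke $\bot \sbelow y$ at this $z$ and $P$ to obtain $P$, which is exactly the desired existence of some $i : I$. The nontrivial inequality $z \below \bigvee \delta_{\bot,z,P}$ reduces to showing $\alpha(i) \below \bigvee \delta_{\bot,z,P}$ for each $i : I$: from any such $i$ one obtains by truncation a witness $p : P$, whence $\delta_{\bot,z,P}(\inr(p)) = z$ gives $\alpha(i) \below z \below \bigvee \delta_{\bot,z,P}$.

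For (ii) $\Rightarrow$ (i), the base inequality $\bot \below y$ is automatic, so it remains to consider arbitrary $w \aboveorder y$ and $P : \V$ satisfying $w = \bigvee \delta_{\bot, w, P}$ and to derive $P$. I would take the constant family $\alpha \colonequiv \lambda p.\,w : P \to X$ and observe that $\bigvee \alpha = \bigvee \delta_{\bot, w, P}$, since adjoining $\bot$ to a family leaves its supremum unchanged. Then $y \below w = \bigvee \alpha$ and (ii) supplies $\squash*{P}$, which collapses to $P$ as $P$ is a proposition. The main obstacle I anticipate is combinatorial rather than computational: spotting the dual choices---$P \leftrightarrow \squash*{I}$ in one direction, and the constant family $\alpha \leftrightarrow \delta_{\bot,w,P}$ in the other---that convert the $\sbelow$ condition into the existence condition and back. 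Once these are identified, everything reduces to routine bookkeeping with suprema and the absorptive role of $\bot$.
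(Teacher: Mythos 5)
Your proof is correct and follows essentially the same route as the paper: (i)$\Leftrightarrow$(iii) via \cref{sbelow-trans} and untruncation (legitimate since $\bot \sbelow y$ is a proposition), (i)$\Rightarrow$(ii) by taking $P \colonequiv \squash*{I}$ and verifying $\bigvee\alpha = \bigvee\delta_{\bot,\bigvee\alpha,P}$, and (ii)$\Rightarrow$(i) via the constant family $(p:P)\mapsto w$, whose supremum agrees with $\bigvee\delta_{\bot,w,P}$. The only cosmetic difference is that you apply the strictly-below condition for $\bot \sbelow y$ directly at $z \colonequiv \bigvee\alpha \aboveorder y$, whereas the paper first upgrades to $\bot \sbelow \bigvee\alpha$ by \cref{sbelow-trans}; both are equally valid.
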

\begin{proof}
  Write \(\bot\) for the least element of \(X\). By~\cref{sbelow-trans} we have:
  \[
    \bot \sbelow y
    \iff \exists_{x : X}\pa*{\bot \below x \sbelow y}
    \iff \exists_{x : X}\pa*{x \sbelow y},
  \]
  which proves the equivalence of (i) and (iii). It remains to prove that (i) and
  (ii) are equivalent. Suppose that \(\bot \sbelow y\) and let
  \(\alpha : I \to X\) with \(y \below \bigvee \alpha\). Using
  \(\bot \sbelow y \below \bigvee \alpha\) and~\cref{sbelow-trans}, we have
  \(\bot \sbelow \bigvee \alpha\). Hence, we only need to prove
  \(\bigvee \alpha \below \bigvee \delta_{\bot,\bigvee \alpha,\exists {i :
      I}}\), but
  \(\alpha_j \below \bigvee \delta_{\bot,\bigvee\alpha,\exists {i : I}}\) for
  every \(j : I\), so this is true indeed.
  For the converse, assume that \(y\) satisfies (ii), suppose
  \(z \aboveorder y\) and let \(P : \V\) be a proposition such that
  \(z = \bigvee \delta_{\bot,z,P}\). We must show that \(P\) holds. But notice
  that
  \(y \below z = \bigvee \delta_{\bot,z,P} = \bigvee \pa*{(p : P)\mapsto z}\),
  so \(P\) must be inhabited as \(y\) satisfies~(ii).
\end{proof}
Item (ii) in~\cref{positive-element-equivalent} says exactly that \(y\) is a
positive element in the sense of~\cite[p.~98]{Johnstone1984}. We note that item
(iii) in~\cref{positive-element-equivalent} makes sense even when \((X,\below)\)
is not a \(\V\)-sup-lattice, but just a \deltacomplete{\V} poset. Accordingly,
we make the following definition.

\begin{definition}[Positive element]
  \label{def:positive-element}
  An element of a \deltacomplete{\V} poset is \emph{positive} if it satisfies
  item~(iii) in~\cref{positive-element-equivalent}.
\end{definition}

An element of a \(\V\)-dcpo is called \emph{compact} if it is inaccessible by
directed joins of families indexed by types in
\(\V\)~\cite[Definition~44]{deJongEscardo2021}.

\begin{proposition}
  A compact element \(x\) of a \(\V\)-dcpo with least element \(\bot\) is
  positive if and only if \(x \neq \bot\).
\end{proposition}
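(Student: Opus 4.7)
The plan is to handle the two implications separately, reducing the positivity side to the condition $\bot \sbelow x$ (which is what positivity amounts to in the presence of a least element) and using compactness on the specific family $\delta_{\bot,z,P}$ on the other side.

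For the forward direction, assume $x$ is positive, so by \cref{def:positive-element} and item (iii) of \cref{positive-element-equivalent} there exists some $y : X$ with $y \sbelow x$. Since $\bot \below y$, \cref{sbelow-trans}(ii) applied to $\bot \below y \sbelow x$ yields $\bot \sbelow x$, and then \cref{sbelow-below-neq} gives $\bot \neq x$, i.e.\ $x \neq \bot$. (Strictly, \cref{sbelow-trans}(ii) is stated as $x \sbelow y \below z \Rightarrow x \sbelow z$, which is exactly the shape we need here with $(\bot, y, x)$ in place of $(x, y, z)$.)

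For the backward direction, assume $x$ is compact and $x \neq \bot$. By the equivalence $(\text{i}) \Leftrightarrow (\text{iii})$ of \cref{positive-element-equivalent}, which was proved only from \cref{sbelow-trans} and so applies in any \deltacomplete{\V} poset with a least element, it suffices to show $\bot \sbelow x$. So let $z \aboveorder x$ and let $P : \V$ be a proposition with $z = \bigvee \delta_{\bot,z,P}$; we must show that $P$ holds. The family $\delta_{\bot,z,P} : 1 + P \to X$ is directed (as noted in \cref{examples-of-delta-complete-posets}) and indexed by a type in $\V$. From $x \below z = \bigvee \delta_{\bot,z,P}$ and compactness of $x$, we get $\exists_{i : 1 + P}\, x \below \delta_{\bot,z,P}(i)$.

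Since $P$ is a proposition, we may eliminate this truncation by case analysis on $i$: if $i = \inl(\star)$ then $x \below \bot$, hence $x = \bot$, contradicting the hypothesis $x \neq \bot$ and yielding $P$ ex falso; if $i = \inr(p)$, then $p : P$ directly. In both cases $P$ holds, establishing $\bot \sbelow x$ and completing the proof. The only mildly delicate point is that we are using compactness in the form \emph{$x \below \bigvee \alpha$ implies $\exists i.\, x \below \alpha(i)$} for $\V$-directed families, which is what \cite[Definition~44]{deJongEscardo2021} provides; everything else is a direct use of lemmas already established in the section.
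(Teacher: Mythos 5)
Your proof is correct and takes essentially the same route as the paper: the converse is the identical compactness argument applied to the family \(\delta_{\bot,z,P}\), and the forward direction is the paper's appeal to \cref{sbelow-below-neq}, merely spelled out via transitivity of the strictly-below relation. One small slip: the step \(\bot \below y \sbelow x \Rightarrow \bot \sbelow x\) is item (i) of \cref{sbelow-trans}, not item (ii) as your parenthetical claims, but the argument goes through verbatim with the correct citation.
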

\begin{proof}
  One implication is taken care of by~\cref{sbelow-below-neq}. For the converse,
  suppose that \(x \neq \bot\).
  We show that \(\bot\) is strictly below \(x\). For if
  \(x \below y = \bigvee \delta_{\bot,y,P}\), then by compactness of \(x\),
  there must exist \(i : \One + P\) such that \(x \below \delta_{\bot,y,P}(i)\)
  already.  But \(i\) can't be equal to \(\inl(\star)\), since \(x\) is assumed
  to be different from~\(\bot\). Hence, \(i = \inr(p)\) and \(P\) must hold.
\end{proof}
Looking to strengthen the notion of a nontrivial poset, we make the following
definition, whose terminology is inspired by~\cref{def:positive-element}.

\begin{definition}[Positive poset]
  A \deltacomplete{\V} poset \(X\) is \emph{positive} if we have designated
  \(x,y : X\) with \(x\) strictly below \(y\).
\end{definition}

\begin{examples}\hfill
  \begin{enumerate}[(i)]
  \item Consider an element \(P\) of the \deltacomplete{\V} poset
    \(\Omega_\V\). The pair \(\pa*{\Zero_\V , P}\) witnesses nontriviality of
    \(\Omega_\V\) if and only if \(\lnot\lnot P\) holds, while it witnesses
    positivity if and only if \(P\) holds.
  \item Consider the \(\V\)-powerset \(\powerset_{\V}(X)\) on a type \(X\) as a
    \deltacomplete{\V} poset
    (recall~\cref{examples-of-delta-complete-posets}). We write
    \(\emptyset : \powerset_{\V}(X)\) for the map \(x \mapsto \Zero_\V\).  Say
    that a subset \(A : \powerset_{\V}(X)\) is nonempty if \(A \neq \emptyset\)
    and inhabited if there exists some \(x : X\) such that \(A(x)\) holds.  The
    pair \((\emptyset , A)\) witnesses nontriviality of \(\powerset_\V(X)\) if
    and only if \(A\) is nonempty, while it witnesses positivity if and only if
    \(A\) is inhabited. In particular, \(\powerset_\V(X)\) is positive if and
    only if \(X\) is an inhabited type.
  \end{enumerate}
\end{examples}

% \begin{examples}\hfill
%   \begin{enumerate}[(i)]
%   \item The \deltacomplete{\V} poset \(\Omega_{\V}\) is positive with
%     \(\Zero_{\V} \sbelow \One_{\V}\).
%   \item Let \(X : \V\) be a set with a point \(x : X\). The \deltacomplete{\V} poset
%   \(\powerset_{\V}(X)\) (recall~\cref{examples-of-delta-complete-posets}) with
%   points \(\emptyset\) and \(\{x\}\) is positive. Here we employ the familiar
%   set-theoretic notation \(\{x\}\) for what is formally the function
%   \(\pa*{y \mapsto x = y} : X \to \Omega_{\V}\), which is well-defined, because
%   \(X\) is assumed to be a set. Similarly, \(\emptyset\) formally denotes the
%   function \(y \mapsto \Zero\).

%   By contrast, if we only know that \(X\) is nonempty (i.e.\
%   \(X \neq \Zero_{\V}\)), then \(\powerset_{\V}(X)\) with points \(\emptyset\)
%   and \(X\) (considered as a subset) is only nontrivial.
%   \end{enumerate}
% \end{examples}

\subsection{Retract Lemmas}\label{sec:retract-lemmas}
We show that the type of propositions in \(\V\) is a retract of any positive
\deltacomplete{\V} poset and that the type of \(\lnot\lnot\)-stable
propositions in \(\V\) is a retract of any nontrivial \deltacomplete{\V} poset.

\begin{definition}[\(\Delta_{x,y} : \Omega_{\V} \to X\)]
  Suppose that \((X,\below,x,y)\) is a nontrivial \deltacomplete{\V} poset.
  We define \(\Delta_{x,y} : \Omega_{\V} \to X\) by the assignment
  \(P \mapsto \bigvee \delta_{x,y,P}\).
\end{definition}
We will often omit the subscripts in \(\Delta_{x,y}\) when it is clear from the
context.

\begin{definition}[Locally small]
  A \deltacomplete{\V} poset \((X,\below)\) is \emph{locally small} if its order
  has values of size \(\V\), i.e.\ we have
  \({\below_{\V}} : X \to X \to \V\) with
  \(\pa*{x \below y} \simeq \pa*{x \below_{\V} y}\) for every \(x,y : X\).
\end{definition}

\begin{examples}\hfill
  \begin{enumerate}[(i)]
  \item The \(\V\)-sup-lattices \(\Omega_{\V}\) and \(\powerset_\V(X)\) (for
    \(X : \V\)) are locally small.
  \item All examples of \(\V\)-dcpos in~\cite{deJongEscardo2021} are locally
    small.
  \end{enumerate}
\end{examples}

\begin{lemma}\label{nontrivial-retract}
  A locally small \deltacomplete{\V} poset \((X,\below)\) is nontrivial,
  witnessed by elements \(x \below y\), if~and only if the composite
  \(\Omeganotnot{\V} \hookrightarrow \Omega_{\V} \xrightarrow{\Delta_{x,y}} X\)
  is a section.
\end{lemma}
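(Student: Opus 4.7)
My plan is to prove the biconditional by constructing an explicit retraction from $X$ back to $\Omeganotnot{\V}$, exploiting local smallness to land in $\V$. For the forward direction, given nontriviality with $x \below y$ and $x \neq y$, I will define $r : X \to \Omeganotnot{\V}$ by $r(z) \colonequiv \lnot\lnot(y \below_{\V} z)$, where $\below_{\V}$ is the $\V$-small version of the order supplied by local smallness. This is manifestly $\lnot\lnot$-stable and lives in $\V$, so it is a valid element of $\Omeganotnot{\V}$.

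I then need to verify $r(\Delta_{x,y}(P)) = P$ for every $\lnot\lnot$-stable $P$; by propositional extensionality this reduces to a logical equivalence. One direction is direct: if $P$ holds, then $y$ is in the image of $\delta_{x,y,P}$ and is simultaneously an upper bound of the family (because $x \below y$), so $\Delta_{x,y}(P) = y$ and hence $\lnot\lnot(y \below \Delta_{x,y}(P))$. For the other direction, I use that $\Delta_{x,y}(P) \below y$ always holds, so $y \below \Delta_{x,y}(P)$ forces $\Delta_{x,y}(P) = y$ by antisymmetry; \cref{delta-sup-weak-em}\eqref{delta-sup-weak-em-2} then yields $\lnot\lnot P$. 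Propagating the outer $\lnot\lnot$ through this implication and using $\lnot\lnot\lnot\lnot P \to \lnot\lnot P$ gives $\lnot\lnot P$, and $\lnot\lnot$-stability of $P$ closes the argument.

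For the converse, suppose the composite has a retraction $r$. Since $x \below y$ is already part of the data required to define $\Delta_{x,y}$, it suffices to derive $x \neq y$. I apply $r$ to the images of the two $\lnot\lnot$-stable propositions $\One_{\V}$ and $\Zero_{\V}$: computing the suprema of the resulting two-point and one-point families gives $\Delta_{x,y}(\One_{\V}) = y$ (using $x \below y$) and $\Delta_{x,y}(\Zero_{\V}) = x$, so the retraction equation forces $r(y) = \One_{\V}$ and $r(x) = \Zero_{\V}$, which are distinct in $\Omeganotnot{\V}$ by propositional extensionality, hence $x \neq y$.

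The main subtlety I expect is the $\lnot\lnot$-stability bookkeeping in the forward direction: the naïve choice $r(z) \colonequiv (y \below_{\V} z)$ fails because that proposition need not be $\lnot\lnot$-stable, and the $\lnot\lnot$ wrapper is precisely what lets \cref{delta-sup-weak-em}\eqref{delta-sup-weak-em-2}, combined with $\lnot\lnot$-stability of $P$, close the equivalence. Local smallness enters only to ensure the codomain of $r$ lives in $\V$ rather than in a larger universe.
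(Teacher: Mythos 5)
Your proof is correct and takes essentially the same route as the paper: an explicit retraction \(X \to \Omeganotnot{\V}\) built from the small order (local smallness only ensuring the values land in \(\V\)), verified on \(\lnot\lnot\)-stable propositions via propositional extensionality, with the identical converse obtained by evaluating the retraction at \(\Delta_{x,y}(\Zero_{\V}) = x\) and \(\Delta_{x,y}(\One_{\V}) = y\). The only difference is cosmetic: the paper uses \(r(z) \colonequiv \lnot\pa*{z \below_{\V} x}\) and argues directly under the assumption \(\lnot P\), whereas you use \(r(z) \colonequiv \lnot\lnot\pa*{y \below_{\V} z}\) and route through \cref{delta-sup-weak-em} plus double-negation bookkeeping; both work.
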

\begin{proof}
  Suppose first that \((X,\below,x,y)\) is nontrivial and locally small. We define
  \begin{align*}
    r : X &\to \Omeganotnot{\V} \\
    z &\mapsto z \not\below_{\V} x.
  \end{align*}
  Note that negated propositions are \(\lnot\lnot\)-stable, so \(r\) is
  well-defined. Let \(P : \V\) be an arbitrary \(\lnot\lnot\)-stable
  proposition. We want to show that \(r (\Delta_{x,y}(P)) = P\). By propositional
  extensionality, establishing logical equivalence suffices.
  Suppose first that \(P\) holds. Then
  \(\Delta_{x,y}(P) \equiv \bigvee \delta_{x,y,P} = y\), so
  \(r(\Delta_{x,y}(P)) = r(y) \equiv \pa*{y \not\below_{\V} x}\) holds by
  antisymmetry and our assumptions that \(x \below y\) and \(x \neq y\).
  Conversely, assume that \(r(\Delta_{x,y}(P))\) holds, i.e.\ that we have
  \(\bigvee \delta_{x,y,P} \not\below_{\V} x\). Since \(P\) is
  \(\lnot\lnot\)-stable, it suffices to derive a contradiction from~\(\lnot
  P\). So assume~\(\lnot P\). Then \(x = \bigvee \delta_{x,y,P}\), so
  \(r(\Delta_{x,y}(P)) = r(x) \equiv x \not\below_{\V} x\), which is false by
  reflexivity.

  For the converse, assume that
  \(\Omeganotnot{\V} \hookrightarrow \Omega_{\V} \xrightarrow{\Delta_{x,y}} X\)
  has a retraction \(r : \Omeganotnot{\V} \to X\). Then
  \(\Zero_{\V} = r(\Delta_{x,y}(\Zero_{\V})) = r(x)\) and
  \(\One_{\V} = r(\Delta_{x,y}(\One_{\V})) = r(y)\),
  where we used that \(\Zero_{\V}\) and \(\One_{\V}\) are \(\lnot\lnot\)-stable.
  Since \(\Zero_{\V} \neq \One_{\V}\), we get \(x \neq y\), so
  \((X,\below,x,y)\) is nontrivial, as desired.
\end{proof}
The appearance of the double negation in the above lemma is due to the
definition of nontriviality. If we instead assume a positive poset \(X\), then
we can exhibit all of \(\Omega_{\V}\) as a retract of \(X\).
\begin{lemma}\label{positive-retract}
  A locally small \deltacomplete{\V} poset \((X,\below)\) is positive, witnessed
  by elements \(x \below y\), if~and only if for every \(z \aboveorder y\), the
  map \(\Delta_{x,z} : \Omega_{\V} \to X\) is a section.
\end{lemma}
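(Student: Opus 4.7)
The plan is to mirror the strategy used in Lemma~\ref{nontrivial-retract}, but with a different retraction that exploits strict below-ness together with local smallness. For the forward direction, assume $x \sbelow y$ and fix an arbitrary $z \aboveorder y$; I would construct a retraction of $\Delta_{x,z}$ explicitly. First, by Lemma~\ref{sbelow-trans} applied to $x \sbelow y \below z$, one obtains $x \sbelow z$. Using local smallness, define $r \colon X \to \Omega_\V$ by $r(v) \colonequiv (z \below_\V v)$. To verify that $r \circ \Delta_{x,z} = \id_{\Omega_\V}$, note that for any proposition $P$ the family $\delta_{x,z,P}$ is bounded above by $z$, so $\bigvee \delta_{x,z,P} \below z$; by antisymmetry, the proposition $r(\Delta_{x,z}(P))$, namely $z \below \bigvee \delta_{x,z,P}$, is logically equivalent to the equality $z = \bigvee \delta_{x,z,P}$. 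If $P$ holds, then $z$ appears in the family and its supremum equals $z$, giving the equality; conversely, the equality combined with $x \sbelow z$ yields $P$ directly from the definition of strict below-ness. Propositional extensionality then delivers $r(\Delta_{x,z}(P)) = P$.

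For the converse, assume that $\Delta_{x,z}$ is a section for every $z \aboveorder y$, and let $r_z \colon X \to \Omega_\V$ denote a chosen retraction. I would directly verify the definition of $x \sbelow y$: fix $z \aboveorder y$ and a proposition $P : \V$ with $z = \bigvee \delta_{x,z,P}$; we must deduce $P$. Since $z$ is the largest element in the family $\delta_{x,z,\One_\V}$, we have $z = \Delta_{x,z}(\One_\V)$; combined with the hypothesis $z = \Delta_{x,z}(P)$, applying $r_z$ and invoking the retraction equation yields $\One_\V = r_z(\Delta_{x,z}(\One_\V)) = r_z(z) = r_z(\Delta_{x,z}(P)) = P$, so $P$ holds.

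The main subtlety is choosing the right retraction in the forward direction. The retraction used in Lemma~\ref{nontrivial-retract} ($v \mapsto v \not\below_\V x$) lands only in $\Omeganotnot{\V}$ and is insufficient here. The choice $v \mapsto z \below_\V v$ works because, via Lemma~\ref{sbelow-trans}, strict below-ness of $x$ beneath an arbitrary upper bound $z$ of $y$ is exactly what lets us recover an arbitrary $P : \Omega_\V$ from the equation $z = \bigvee \delta_{x,z,P}$; this also explains why the statement quantifies over all $z \aboveorder y$ rather than just taking $z = y$.
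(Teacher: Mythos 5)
Your proof is correct and follows essentially the same route as the paper's: the same retraction $w \mapsto (z \below_{\V} w)$ in the forward direction (with the equivalence $z \below \bigvee\delta_{x,z,P} \iff z = \bigvee\delta_{x,z,P}$ via antisymmetry and strict below-ness of $x$ under $z$, obtained through \cref{sbelow-trans}), and the same chain $\One_{\V} = r_z(\Delta_{x,z}(\One_{\V})) = r_z(z) = r_z(\Delta_{x,z}(P)) = P$ in the converse.
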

\begin{proof}
  Suppose first that \((X,\below,x,y)\) is positive and locally small and let
  \(z \aboveorder y\) be arbitrary. We define
  \begin{align*}
    r_z : X &\mapsto \Omega_{\V} \\
    w &\mapsto z \below_{\V} w.
  \end{align*}
  Let \(P : \V\) be arbitrary proposition. We want to show that
  \(r_z(\Delta_{x,z}(P)) = P\). Because of propositional extensionality, it
  suffices to establish a logical equivalence between \(P\) and
  \(r_z(\Delta_{x,z}(P))\).
  Suppose first that \(P\) holds. Then \(\Delta_{x,z}(P) = z\), so
  \(r_z(\Delta_{x,z}(P)) = r_z(z) \equiv \pa*{z \below_{\V} z}\) holds as well
  by reflexivity.
  Conversely, assume that \(r_z(\Delta_{x,z}(P))\) holds, i.e.\ that we have
  \(z \below_{\V} \bigvee \delta_{x,z,P}\). Since
  \({\bigvee \delta_{x,z,P} \below z}\) always holds, we get
  \(z = \bigvee \delta_{x,z,P}\) by antisymmetry. But by assumption
  and~\cref{sbelow-trans}, the element \(x\) is strictly~below~\(z\), so \(P\)
  must hold.

  For the converse, assume that for every \(z \aboveorder y\), the map
  \(\Delta_{x,z} : \Omega_{\V} \to X\) has a retraction
  \(r_z : X \to \Omega_{\V}\). We must show that the equality
  \(z = \Delta_{x,z}(P)\) implies \(P\) for every \(z \aboveorder y\) and
  proposition \(P : \V\). Assuming \(z = \Delta_{x,z}(P)\), we have
  \(\One_{\V} = r_z(\Delta_{x,z}(\One_{\V})) = r_z(z) = r_z(\Delta_{x,z}(P)) =
  P\), so \(P\) must hold indeed. Hence, \((X,\below,x,y)\) is positive, as
  desired.
\end{proof}

\subsection{Reductions to Impredicativity and Excluded Middle}
\label{sec:reductions}
We present our main theorems here, which show that, constructively and
predicatively, nontrivial \deltacomplete{\V} posets are necessarily large and
necessarily lack decidable equality.

\begin{definition}[Small]
  A \deltacomplete{\V} poset is \emph{small} if it is locally small and its
  carrier has size \(\V\).
\end{definition}

\begin{theorem}\label{nontrivial-impredicativity}\label{positive-impredicativity}\hfill
  \begin{enumerate}[(i)]
  \item\label{nontrivial-impredicativity-1} There is a nontrivial small
    \deltacomplete{\V} poset if and only if \(\Omeganotnotresizingalt{\V}\)
    holds.
  \item\label{positive-impredicativity-2} There is a positive small
    \deltacomplete{\V} poset if and only if \(\Omegaresizingalt{\V}\) holds.
  \end{enumerate}
\end{theorem}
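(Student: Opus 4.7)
My plan is to split each biconditional into two directions, using the retract lemmas~\cref{nontrivial-retract,positive-retract} together with~\cref{size-retract}(ii) for the forward implications, and direct constructions involving \(\Omeganotnot{\V}\) and \(\Omega_{\V}\) for the converses.

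For the forward direction of item~(i), given a nontrivial small \deltacomplete{\V} poset \((X, \below, x, y)\), \cref{nontrivial-retract} exhibits \(\Omeganotnot{\V}\) as a retract of \(X\) via the composite \(\Omeganotnot{\V} \hookrightarrow \Omega_{\V} \xrightarrow{\Delta_{x, y}} X\). Since every poset is a set and \(X\) has size \(\V\) by assumption, \cref{size-retract}(ii) immediately yields \(\Omeganotnotresizingalt{\V}\). The forward direction of item~(ii) is entirely parallel: \cref{positive-retract} applied with \(z \colonequiv y\) presents \(\Omega_{\V}\) as a retract of the set \(X\), and \cref{size-retract}(ii) delivers \(\Omegaresizingalt{\V}\).

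For the converse of item~(ii) I would use \(\Omega_{\V}\) itself as the witnessing small positive \deltacomplete{\V} poset: it is \deltacomplete{\V} by~\cref{examples-of-delta-complete-posets}, positive because \(\Zero_{\V} \sbelow \One_{\V}\) (by the example following~\cref{def:strictly-below}), and locally small since implications between types in \(\V\) already live in \(\V\); the hypothesis \(\Omegaresizingalt{\V}\) then supplies the remaining size condition on the carrier. The converse of item~(i) proceeds identically using \(\Omeganotnot{\V}\) ordered by implication, which is nontrivial via the witnesses \(\Zero_{\V} \below \One_{\V}\) and locally small for the same reason.

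The main obstacle lies in verifying that \(\Omeganotnot{\V}\) is itself \(\delta_{\V}\)-complete, since the supremum formula used for \(\Omega_{\V}\) in~\cref{examples-of-delta-complete-posets} need not preserve \(\lnot\lnot\)-stability. For \(Q \below R\) in \(\Omeganotnot{\V}\) and \(P : \V\), I would take \(\bigvee \delta_{Q, R, P} \colonequiv \lnot\lnot(Q \vee (R \wedge P))\), which is \(\lnot\lnot\)-stable by construction, upper-bounds both branches of \(\delta_{Q, R, P}\), and is least among \(\lnot\lnot\)-stable upper bounds \(S\) because \(Q \vee (R \wedge P) \to S\) implies \(\lnot\lnot(Q \vee (R \wedge P)) \to \lnot\lnot S \to S\). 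Once this is in place, the remaining verifications are immediate, and the theorem follows by combining the four pieces above.
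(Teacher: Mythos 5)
Your proposal is correct and takes essentially the same route as the paper: the retract lemmas together with \cref{size-retract} (using that posets are sets) give the forward directions, and \(\pa*{\Omega_{\V},\to,\Zero_\V,\One_\V}\) and \(\pa*{\Omeganotnot{\V},\to,\Zero_\V,\One_\V}\) serve as the witnesses for the converses. The only cosmetic difference is in how \(\delta_{\V}\)-completeness of \(\Omeganotnot{\V}\) is verified: the paper observes it is a \(\V\)-sup-lattice with suprema \(\lnot\lnot\exists_{i:I}\alpha_i\), of which your formula \(\lnot\lnot\pa*{Q \vee \pa*{R \wedge P}}\) is just the relevant special case.
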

\begin{proof}
  \eqref{nontrivial-impredicativity-1}
  Suppose that \((X,\below,x,y)\) is a nontrivial small \deltacomplete{\V}
    poset. By \cref{nontrivial-retract}, we can exhibit \(\Omeganotnot{\V}\) as
    a retract of \(X\). But \(X\) has size \(\V\) by assumption, so
    by~\cref{size-retract} and the fact that \(\Omeganotnot{\V}\) is a set, the
    type \(\Omeganotnot{\V}\) has size \(\V\) as~well.
    For the converse, note that
    \(\pa*{\Omeganotnot{\V},\to,\Zero_{\V},\One_{\V}}\) is a nontrivial
    \(\V\)-sup-lattice with \(\bigvee \alpha\) given by
    \(\lnot\lnot\exists_{i : I}\alpha_i\). And if we assume
    \(\Omeganotnotresizingalt{\V}\), then it is small.

    \eqref{positive-impredicativity-2}
    Suppose that \((X,\below,x,y)\) is a positive small poset. By
    \cref{positive-retract}, we can exhibit \(\Omega_{\V}\) as a retract of
    \(X\). But \(X\) has size \(\V\) by assumption, so by~\cref{size-retract}
    and the fact that \(\Omega_{\V}\) is a set, the type \(\Omega_{\V}\) has
    size \(\V\) as well.
    For the converse, note that \(\pa*{\Omega_{\V},\to,\Zero_\V,\One_\V}\) is a
    positive \(\V\)-sup-lattice. And if we assume \(\Omegaresizingalt{\V}\),
    then it is small. \qedhere
\end{proof}

\begin{lemma}[\href{https://www.cs.bham.ac.uk/~mhe/agda-new/DiscreteAndSeparated.html\#retract-is-discrete}{\texttt{retract-is-discrete}} and
  \href{https://www.cs.bham.ac.uk/~mhe/agda-new/DiscreteAndSeparated.html\#subtype-is-\%C2\%AC\%C2\%AC-separated}{\texttt{subtype-is-\(\lnot\lnot\)-separated}}
  in \cite{TypeTopology}]\hfill
  \label{equality-retract}
  \begin{enumerate}[(i)]
  \item Types with decidable equality are closed under retracts.
  \item Types with \(\lnot\lnot\)-stable equality are closed under retracts.
  \end{enumerate}
\end{lemma}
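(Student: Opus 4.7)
The plan is to reduce each closure property on $X$ to the corresponding property on $Y$ by transporting the equality question along the section $s : X \to Y$ and pulling the answer back along the retraction $r : Y \to X$ (with $r \circ s \sim \id$). The key observation is that sections reflect equality: if $s(x_1) = s(x_2)$, then $x_1 = r(s(x_1)) = r(s(x_2)) = x_2$, and conversely $x_1 = x_2$ trivially implies $s(x_1) = s(x_2)$. So the proposition $x_1 = x_2$ is logically equivalent to $s(x_1) = s(x_2)$.

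For part (i), I would take $x_1, x_2 : X$ and apply decidability of $s(x_1) = s(x_2)$ in $Y$. In the positive case, apply $r$ and the retraction identity to obtain $x_1 = x_2$. In the negative case, use the contrapositive of $\ap_s$ to obtain $x_1 \neq x_2$. This gives decidability of $x_1 = x_2$.

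For part (ii), I would assume $\lnot\lnot(x_1 = x_2)$ and show $x_1 = x_2$. From $\ap_s$ one gets the implication $(x_1 = x_2) \to (s(x_1) = s(x_2))$, which contraposes twice to $\lnot\lnot(x_1 = x_2) \to \lnot\lnot(s(x_1) = s(x_2))$. Using $\lnot\lnot$-stability of equality in $Y$, this yields $s(x_1) = s(x_2)$, and applying $r$ together with the retraction identity delivers $x_1 = x_2$.

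Neither part looks to present a genuine obstacle: both reductions are one-line applications of the fact that a section reflects equality. The only minor subtlety is remembering, in part (ii), that $\lnot\lnot$ preserves implications (so one does not need any nontrivial commutation of $\lnot\lnot$ with anything else); the rest is just unfolding definitions. The argument is exactly analogous to the standard proof that $h$-levels are closed under retracts, specialised to the $h$-level of the identity type.
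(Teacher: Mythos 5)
Your proof is correct, and it matches the paper's intended argument: the paper gives no in-text proof but defers to the cited TypeTopology formalizations, which rest on exactly your observation that a section $s$ (having a retraction $r$ with $r \circ s \sim \id$) reflects equality, so decidability and $\lnot\lnot$-stability of equality transfer from $Y$ to its retract $X$. Nothing further is needed.
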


\begin{theorem}\label{nontrivial-weak-em}
  There is a nontrivial locally small \deltacomplete{\V} poset with decidable
  equality if and only if weak excluded middle in \(\V\) holds.
\end{theorem}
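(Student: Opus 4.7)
My plan is to prove the equivalence by combining Lemma~\ref{nontrivial-retract} with the closure of decidable equality under retracts (Lemma~\ref{equality-retract}(i)) for one direction, and by exhibiting a concrete two-element poset for the other.

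For the forward direction, I would suppose $(X,\below,x,y)$ is a nontrivial locally small \deltacomplete{\V} poset with decidable equality. By Lemma~\ref{nontrivial-retract} the type $\Omeganotnot{\V}$ is a retract of $X$, and Lemma~\ref{equality-retract}(i) then transports decidable equality to $\Omeganotnot{\V}$. The key move is to apply this decidable equality not to $P$ itself but to $\neg\neg P$, which is always $\neg\neg$-stable and hence lies in $\Omeganotnot{\V}$. I would then decide the equality $\neg\neg P = \Zero_{\V}$ inside $\Omeganotnot{\V}$: by propositional extensionality this equality is logically equivalent to $\neg P$, so the positive case of the decision yields $\neg P$, while the negative case yields its negation, namely $\neg\neg P$. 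This produces $\neg P \vee \neg\neg P$, i.e.\ weak excluded middle for $P$.

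For the converse, assume weak excluded middle in $\V$ and take the two-element poset $\Two$ (in $\V$) with $0 \below 1$. Nontriviality, local smallness and decidable equality are immediate. To verify \deltacomplete{\V}-ness, given a proposition $P:\V$, weak excluded middle splits into two cases. If $\neg P$ holds, the family $\delta_{0,1,P}$ reduces to a constant family at $0$ (its $P$-branch is vacuous) and has supremum $0$. If $\neg\neg P$ holds, then $1$ is an upper bound, and any candidate upper bound $z = 0$ would force $1 = \delta_{0,1,P}(\inr(p)) \below 0$ for every $p:P$, which together with $\neg\neg P$ contradicts antisymmetry; hence the supremum is $1$.

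The only bit of real cleverness is the choice to decide $\neg\neg P = \Zero_{\V}$ rather than $P = \Zero_{\V}$: the latter would overshoot and deliver full excluded middle, matching the analogous pattern for positive posets handled in Theorem~\ref{nontrivial-impredicativity}\eqref{positive-impredicativity-2}. Beyond that, I do not anticipate any serious obstacle, as the substantive work has already been packaged into the retract lemma.
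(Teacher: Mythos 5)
Your proof is correct, and while the forward direction coincides with the paper's, your converse takes a genuinely different route. For the forward direction the paper does exactly what you do: \cref{nontrivial-retract} plus \cref{equality-retract}(i) give decidable equality on \(\Omeganotnot{\V}\), and then one decides an equality between \(\lnot\lnot\)-stable propositions (the paper is terse here, using that negated propositions are \(\lnot\lnot\)-stable; your choice of deciding \(\lnot\lnot P = \Zero_{\V}\) is an equivalent instantiation, since by propositional extensionality that equality is \(\lnot P\) and its negation gives \(\lnot\lnot P\)). For the converse, however, the paper's witness is \(\pa*{\Omeganotnot{\V},\to,\Zero_{\V},\One_{\V}}\): a nontrivial locally small \(\V\)-sup-lattice unconditionally, whose equality is decidable precisely when weak excluded middle holds; this reuses the structure already set up for \cref{nontrivial-impredicativity} and, being a sup-lattice, also feeds the subsequent corollary for sup-lattices, bounded complete posets and dcpos. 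You instead take \(\Two\) and verify that weak excluded middle makes it \deltacomplete{\V}, by the case split \(\lnot P\) (supremum \(0\)) versus \(\lnot\lnot P\) (supremum \(1\), since the upper bound \(0\) would force \(\lnot P\)); this argument is sound and is not in the paper, whose \cref{classically-every-poset-is-delta-complete} only covers decidable \(P\). Your route is more elementary and has the pleasant side effect of upgrading \cref{Two-is-not-delta-complete} to an equivalence: \(\Two\) is \deltacomplete{\V} if and only if weak excluded middle holds in \(\V\). (One cosmetic slip: the ``positive'' analogue you allude to is \cref{positive-em}, not \cref{nontrivial-impredicativity}\eqref{positive-impredicativity-2}, but this does not affect the argument.)
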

\begin{proof}
  Suppose that \((X,\below,x,y)\) is a nontrivial locally small
  \deltacomplete{\V} poset with decidable equality. Then
  by~\cref{nontrivial-retract,equality-retract}, the type \(\Omeganotnot{\V}\)
  must have decidable equality too. But negated propositions are
  \(\lnot\lnot\)-stable, so this yields weak excluded middle in \(\V\). For the
  converse, note that \(\pa*{\Omeganotnot{\V},\to,\Zero_\V,\One_\V}\) is a
  nontrivial \(\V\)-sup-lattice that has decidable equality if and only if weak
  excluded middle in \(\V\) holds.
\end{proof}

\begin{theorem}\label{positive-em} The following are equivalent:
  \begin{enumerate}[(i)]
  \item There is a positive locally small \deltacomplete{\V} poset with
    \(\lnot\lnot\)-stable equality.
  \item There is a positive locally small \deltacomplete{\V} poset with
    decidable equality.
  \item Excluded middle in \(\V\) holds.
  \end{enumerate}
\end{theorem}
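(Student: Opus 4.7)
The plan is to prove the equivalence cyclically: $(\text{iii}) \Rightarrow (\text{ii}) \Rightarrow (\text{i}) \Rightarrow (\text{iii})$. Two directions are essentially recycled from the paper's earlier machinery; the content lies in $(\text{i}) \Rightarrow (\text{iii})$, which is a short chain using \cref{positive-retract} and \cref{equality-retract}.

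For $(\text{iii}) \Rightarrow (\text{ii})$, I would exhibit $\Omega_{\V}$ itself as the witness, exactly as in the second part of the proof of \cref{positive-impredicativity}. It is a positive, locally small $\V$-sup-lattice, hence a positive locally small \deltacomplete{\V} poset. Under excluded middle in $\V$, any two propositions $P, Q : \V$ are either logically equivalent or not, so by propositional extensionality $P = Q$ is decidable; hence $\Omega_{\V}$ has decidable equality. The implication $(\text{ii}) \Rightarrow (\text{i})$ is immediate, since decidable equality always implies $\lnot\lnot$-stable equality.

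The key direction is $(\text{i}) \Rightarrow (\text{iii})$. Suppose $(X,{\below},x,y)$ is a positive locally small \deltacomplete{\V} poset with $\lnot\lnot$-stable equality. By \cref{positive-retract} the map $\Delta_{x,y} : \Omega_{\V} \to X$ is a section, so $\Omega_{\V}$ is a retract of $X$. Applying \cref{equality-retract}(ii) transports $\lnot\lnot$-stability of equality from $X$ to $\Omega_{\V}$. Now for any proposition $P : \V$, propositional extensionality gives $(P = \One_{\V}) \iff P$, and therefore $\lnot\lnot(P = \One_{\V}) \iff \lnot\lnot P$. Combining these, $\lnot\lnot$-stability of equality on $\Omega_{\V}$ instantiated at $P$ and $\One_{\V}$ yields $\lnot\lnot P \to P$ for every proposition $P : \V$, which is double-negation elimination and hence excluded middle in $\V$.

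The only thing to watch is that the formulation of \cref{positive-retract} produces a retraction of $\Delta_{x,z}$ for every $z \aboveorder y$, so at minimum it gives one for $z = y$, which is all we need. No real obstacle arises: the technical work was already done in \cref{positive-retract} (to obtain the retract) and in \cref{equality-retract} (to transfer the equality condition), and the final step reducing $\lnot\lnot$-stability of equality on $\Omega_{\V}$ to excluded middle is the standard observation used implicitly elsewhere in the section.
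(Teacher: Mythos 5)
Your proposal is correct and follows essentially the same route as the paper: exhibit \(\Omega_{\V}\) for (iii)\(\Rightarrow\)(ii), note (ii)\(\Rightarrow\)(i) trivially, and for (i)\(\Rightarrow\)(iii) combine \cref{positive-retract} (instantiated at \(z = y\)) with \cref{equality-retract} to transfer \(\lnot\lnot\)-stable equality to \(\Omega_{\V}\) and conclude double-negation elimination via propositional extensionality. The only difference is that you spell out the final step \(\lnot\lnot(P = \One_{\V}) \iff \lnot\lnot P\) explicitly, which the paper leaves implicit.
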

\begin{proof}
  Note that \(\textup{(ii)}\Rightarrow\textup{(i)}\), so we are left to show
  that \(\textup{(iii)}\Rightarrow\textup{(ii)}\) and that
  \(\textup{(i)}\Rightarrow\textup{(iii)}\). For the first implication, note
  that \(\pa*{\Omega_{\V},\to,\Zero_\V,\One_\V}\) is a positive
  \(\V\)-sup-lattice that has decidable equality if and only if excluded middle
  in \(\V\) holds.
  To see that (i)~implies~(iii), suppose that \((X,\below,x,y)\) is a positive
  locally small \deltacomplete{\V} poset with \(\lnot\lnot\)-stable
  equality. Then by~\cref{positive-retract,equality-retract} the type
  \(\Omega_{\V}\) must have \(\lnot\lnot\)-stable equality. But this implies
  that \(\lnot\lnot P \to P\) for every proposition \(P\) in \(\V\) which is
  equivalent to excluded middle in \(\V\).
\end{proof}
Lattices, bounded complete posets and dcpos are necessarily large and
necessarily lack decidable equality in our predicative constructive setting.
More precisely,
\begin{corollary}\hfill
  \begin{enumerate}[(i)]
  \item There is a nontrivial small \(\V\)-sup-lattice (or \(\V\)-bounded complete
    poset or \(\V\)-dcpo) if~and~only~if \(\Omeganotnotresizingalt{\V}\) holds.
  \item There is a positive small \(\V\)-sup-lattice (or \(\V\)-bounded complete
    poset or \(\V\)-dcpo) if~and~only~if \(\Omegaresizingalt{\V}\) holds.
  \item There is a nontrivial locally small \(\V\)-sup-lattice (or
    \(\V\)-bounded complete poset or \(\V\)-dcpo) with decidable equality if and
    only if weak excluded middle in \(\V\) holds.
  \item There is a positive locally small \(\V\)-sup-lattice (or \(\V\)-bounded
    complete poset or \(\V\)-dcpo) with decidable equality if and only if
    excluded middle in \(\V\) holds.
  \end{enumerate}
\end{corollary}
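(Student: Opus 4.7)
The plan is to observe that the corollary is essentially an immediate consequence of \cref{nontrivial-impredicativity,nontrivial-weak-em,positive-em} once we note the appropriate inclusions between classes of posets. Recall from \cref{examples-of-delta-complete-posets} that every $\V$-sup-lattice, every $\V$-bounded complete poset and every $\V$-dcpo is a \deltacomplete{\V} poset. So for each of the four items, the forward direction (``if there is such a structure, then the resizing/excluded middle principle holds'') follows by observing that the hypothesised $\V$-sup-lattice (resp.\ $\V$-bounded complete poset or $\V$-dcpo) is in particular a \deltacomplete{\V} poset with the same properties (nontrivial, positive, locally small, carrying decidable equality, as appropriate), so the preceding theorem applies verbatim.

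For the backward directions, I would reuse the constructions already given in the proofs of \cref{nontrivial-impredicativity,nontrivial-weak-em,positive-em}. Namely, $\pa*{\Omeganotnot{\V},\to,\Zero_\V,\One_\V}$ is a nontrivial $\V$-sup-lattice, and $\pa*{\Omega_\V,\to,\Zero_\V,\One_\V}$ is a positive $\V$-sup-lattice, with suprema defined exactly as in the proofs of those theorems. The key further observation is that any $\V$-sup-lattice automatically has suprema for all small bounded families and for all small directed families, so these examples are simultaneously $\V$-bounded complete posets and $\V$-dcpos. Hence, assuming the resizing or (weak) excluded middle principle, these structures yield a $\V$-sup-lattice (and thereby a $\V$-bounded complete poset and a $\V$-dcpo) of the required form, as established in the proofs of the preceding theorems.

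There is essentially no obstacle: the corollary is a bookkeeping consequence, and the only subtle point worth mentioning is that local smallness of $\Omega_\V$ and $\Omeganotnot{\V}$ (needed for items (iii) and (iv)) is immediate from propositional extensionality, so the witnesses exhibited above do satisfy all the hypotheses under the assumed principles. I would therefore present the proof as four short paragraphs, each reducing the corresponding item to the relevant theorem via these two observations.
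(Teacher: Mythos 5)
Your proof is correct and follows exactly the route the paper intends for this corollary (which it leaves without explicit proof): the forward directions reduce to \cref{nontrivial-impredicativity,nontrivial-weak-em,positive-em} via \cref{examples-of-delta-complete-posets}, and the backward directions reuse the \(\Omeganotnot{\V}\) and \(\Omega_{\V}\) sup-lattices from those proofs, which are also bounded complete posets and dcpos. Nothing is missing.
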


\subsection{Unspecified Nontriviality and Positivity}
\label{sec:unspecified}
The above notions of non-triviality and positivity are data rather than
property. Indeed, a nontrivial poset \((X,\below)\) is (by definition) equipped
with two designated points \(x,y : X\) such that \(x \below y\) and
\(x \neq y\). It is natural to wonder if the propositionally truncated versions
of these two notions yield the same conclusions. In this section we show that
this is indeed the case if we assume univalence. The need for the univalence
assumption comes from the fact that the notion of having a given size is
property precisely if univalence holds, as shown in
\cref{has-size-is-prop,has-size-univalence}.

\begin{definition}[Nontrivial/positive in an unspecified way]
  A poset \((X,\below)\) is \emph{nontrivial in an unspecified way} if there
  exist some elements \(x,y : X\) such that \(x \below y\) and \(x \neq y\),
  i.e.\
  \(\exists_{x: X}\exists_{y : X}\pa*{\pa*{x \below y} \times \pa*{x \neq y}}\).
  Similarly, we can define when a poset is \emph{positive in an unspecified way}
  by truncating the notion of positivity.
\end{definition}

\begin{theorem}
  Suppose that the universes \(\V\) and \(\V^+\) are univalent.
  \begin{enumerate}[(i)]
  \item\label{unspecified-1} There is a small \deltacomplete{\V} poset that is
    nontrivial in an unspecified way if and only if
    \(\Omeganotnotresizingalt{\V}\) holds.
  \item\label{unspecified-2} There is a small \deltacomplete{\V} poset that is
    positive in an unspecified way if and only if \(\Omegaresizingalt{\V}\)
    holds.
  \end{enumerate}
\end{theorem}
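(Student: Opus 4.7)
The plan is to reduce both parts to the non-truncated versions already established in \cref{nontrivial-impredicativity}, by observing that the conclusions \(\Omeganotnotresizingalt{\V}\) and \(\Omegaresizingalt{\V}\) are, under the univalence hypotheses, propositions, so that the propositional truncations in the hypotheses can be eliminated.

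The backward directions require no new work: if \(\Omeganotnotresizingalt{\V}\) holds, then \cref{nontrivial-impredicativity}\eqref{nontrivial-impredicativity-1} already supplies a genuinely nontrivial small \deltacomplete{\V} poset (namely \((\Omeganotnot{\V},\to,\Zero_\V,\One_\V)\)), which is a fortiori nontrivial in an unspecified way; similarly for (ii) using \cref{nontrivial-impredicativity}\eqref{positive-impredicativity-2}.

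For the forward direction of (i), I would first apply \cref{has-size-is-prop} with \(U = \V^+\) (so that \(\Omeganotnot{\V} : \V^+\)) and target size \(\V\); since \(\V \sqcup \V^+ = \V^+\), the assumed univalence of \(\V\) and \(\V^+\) gives that \(\Omeganotnot{\V} \hassize \V\) is a proposition, i.e.\ \(\Omeganotnotresizingalt{\V}\) is a proposition. Having a small \deltacomplete{\V} poset \((X,\below)\) and the truncated datum \(\exists_{x : X}\exists_{y : X}\pa*{(x \below y) \times (x \neq y)}\), I can then eliminate the truncations into this proposition and invoke the forward direction of \cref{nontrivial-impredicativity}\eqref{nontrivial-impredicativity-1} on an actual witness \((x,y)\) to conclude \(\Omeganotnotresizingalt{\V}\). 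Part (ii) is entirely analogous: \cref{has-size-is-prop} makes \(\Omegaresizingalt{\V}\) a proposition under the same univalence assumption, so the truncation eliminates, and then \cref{nontrivial-impredicativity}\eqref{positive-impredicativity-2} applied to an extracted positivity witness delivers the conclusion.

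The only non-routine step is verifying that the univalence hypothesis is exactly what is needed to apply \cref{has-size-is-prop}; nothing else requires any new construction beyond the proofs already given, since the propositional truncation's universal property does all the remaining work. Conceptually, this section simply shows that, once one pays the price of univalence, the move from "designated" to "unspecified" witnesses costs nothing, which is the expected pattern whenever the target of the reduction is propositional.
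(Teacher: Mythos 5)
Your proposal is correct and follows essentially the same route as the paper: both use \cref{has-size-is-prop} together with the univalence of \(\V\) and \(\V^+\) to see that the target resizing statements are propositions, then eliminate the propositional truncation and invoke \cref{nontrivial-impredicativity}. Your explicit treatment of the backward directions and of the universe bookkeeping (\(X : \V^+\), \(\V \sqcup \V^+ = \V^+\)) is just a spelled-out version of what the paper leaves implicit.
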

\begin{proof}
  \eqref{unspecified-1} Suppose that \((X,\below)\) is a \deltacomplete{\V}
  poset that is nontrivial in an unspecified way.  By~\cref{has-size-is-prop}
  and univalence of \(\V\) and \(\V^+\), type \(\Omeganotnot{\V}\hassize{\V}\)
  is a proposition. By the universal property of the propositional truncation,
  in proving that \(\Omeganotnot{\V}\hassize{\V}\) we can therefore assume that
  are given points \(x,y : X\) with \(x \below y\) and \(x \neq y\). The result
  then follows from~\cref{nontrivial-impredicativity}.
  \eqref{unspecified-2} By reduction to item~(ii) of
  \cref{positive-impredicativity}.
\end{proof}
Similarly, we can prove the following theorems by reduction to
\cref{nontrivial-weak-em,positive-em}.

\begin{theorem}\hfill
  \begin{enumerate}[(i)]
  \item There is a locally small \deltacomplete{\V} poset with decidable
    equality that is nontrivial in an unspecified way if and only if weak
    excluded middle in \(\V\) holds.
  \item There is a locally small \deltacomplete{\V} poset with decidable
    equality that is positive in an unspecified way if and only if excluded
    middle in \(\V\) holds.
  \end{enumerate}
\end{theorem}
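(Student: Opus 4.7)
The plan is to reduce both items to \cref{nontrivial-weak-em} and \cref{positive-em} respectively, in complete analogy with the preceding theorem on small posets. The key observation is that, unlike the situation with ``having size \(\V\)'', the statements of weak excluded middle and excluded middle are \emph{already} propositions, so I can apply the universal property of the propositional truncation without any appeal to univalence via \cref{has-size-is-prop}. Concretely: weak excluded middle is a dependent product over \(\Omega_{\V}\) of types of the form \(\lnot P + \lnot\lnot P\), and each of these is a proposition because the two disjuncts are mutually exclusive; the same reasoning applies to \(P + \lnot P\) for excluded middle.

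For the forward direction of~(i), I start from a locally small \deltacomplete{\V} poset \((X,\below)\) with decidable equality that is nontrivial in an unspecified way, and wish to derive weak excluded middle in \(\V\). Since the conclusion is a proposition by the remark above, the universal property of propositional truncation lets me unfold \(\exists_{x:X}\exists_{y:X}\pa*{(x \below y) \times (x \neq y)}\) and assume designated \(x,y : X\) with \(x \below y\) and \(x \neq y\). The tuple \((X,\below,x,y)\) is then a nontrivial locally small \deltacomplete{\V} poset with decidable equality, and \cref{nontrivial-weak-em} delivers the conclusion. For the converse, if weak excluded middle in \(\V\) holds, then, as already noted in the proof of \cref{nontrivial-weak-em}, \(\pa*{\Omeganotnot{\V}, \to, \Zero_{\V}, \One_{\V}}\) is a nontrivial locally small \(\V\)-sup-lattice with decidable equality, which is in particular nontrivial in an unspecified way.

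Item~(ii) follows the same template verbatim, with \(\Omega_{\V}\) in place of \(\Omeganotnot{\V}\): after unfolding the truncated positivity witnesses (permissible because excluded middle in \(\V\) is a proposition) I invoke \cref{positive-em}, and for the converse I observe that \(\pa*{\Omega_{\V}, \to, \Zero_{\V}, \One_{\V}}\) is a positive locally small \(\V\)-sup-lattice which, under excluded middle in \(\V\), has decidable equality. There is no serious obstacle here; the only conceptual point worth stating explicitly is the one identified above, namely that the conclusions being propositions is exactly what permits the truncation to be eliminated without a univalence hypothesis.
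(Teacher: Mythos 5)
Your proof is correct and follows the same route as the paper, which proves this result precisely by reduction to \cref{nontrivial-weak-em,positive-em} via elimination of the propositional truncation. Your explicit observation that (weak) excluded middle is already a proposition, so no univalence is needed here (in contrast with the size-based theorem), is exactly the implicit justification behind the paper's brief argument.
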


\section{Maximal Points and Fixed Points}
\label{sec:maximal-and-fixed-points}
In this section we construct a particular example of a \(\V\)-sup-lattice that
will prove very useful in studying the predicative validity of some well-known
principles in order theory.

\begin{definition}[Lifting, cf.~\cite{EscardoKnapp2017}]
  \label{lifting-of-prop}
  Fix a proposition \(P_\U\) in a universe \(\U\). Lifting \(P_\U\) with respect
  to a universe \(\V\) is defined by
  \[
    \lifting_{\V}\pa*{P_\U} \colonequiv \sum_{Q : \Omega_\V} \pa*{Q \to P_\U}.
  \]
\end{definition}
This is a subtype of \(\Omega_\V\) and it is closed under \(\V\)-suprema (in
particular, it contains the least~element).

\begin{examples}\hfill
  \begin{enumerate}[(i)]
  \item If \(P_\U \colonequiv \Zero_\U\), then
    \( \lifting_{\V}(P_\U) \simeq \pa*{\sum_{Q : \Omega_\V} \lnot Q} \simeq
    \pa*{\sum_{Q : \Omega_\V} Q = \Zero_{\V}} \simeq \One \).
  \item If \(P_\U \colonequiv \One_\U\), then
    \( \lifting_{\V}(P_\U) \equiv \pa*{\sum_{Q : \Omega_\V} \pa*{Q \to \One_\U}}
    \simeq \Omega_\V \).
  \end{enumerate}
\end{examples}
What makes \(\lifting_{\V}(P_\U)\) useful is the following observation.
\begin{lemma}\label{maximal-iff-resize}
  Suppose that the poset \(\lifting_{\V}(P_\U)\) has a maximal element
  \(Q : \Omega_\V\). Then \(P_\U\) is equivalent to \(Q\), which is the
  greatest element of \(\lifting_{\V}(P_\U)\). In particular,
  \(P_\U\)~has~size~\(\V\).
  Conversely, if \(P_\U\) is equivalent to a proposition \(Q : \Omega_\V\), then
  \(Q\) is the greatest element of~\(\lifting_{\V}(P_\U)\).
\end{lemma}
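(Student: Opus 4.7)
The plan is to exploit a single observation: any witness $p : P_\U$ gives rise to the element $(\One_\V,\,\star \mapsto p)$ of $\lifting_\V(P_\U)$, and this element dominates every other element in the implication order inherited from $\Omega_\V$, since $Q' \to \One_\V$ holds for every proposition $Q'$. Conversely, given any $(Q',f') \in \lifting_\V(P_\U)$, a proof $q' : Q'$ supplies a witness $f'(q') : P_\U$. Together, these two observations let me convert freely between witnesses of $P_\U$ and of the first components of maximal or greatest elements.

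For the first half of the lemma I would assume that $(Q,f)$ is maximal. The map $f$ already supplies $Q \to P_\U$. For the reverse direction, given $p : P_\U$, I would form $(\One_\V,\,\star \mapsto p)$, note that $(Q,f)$ lies below it (since $Q \to \One_\V$ trivially holds), and invoke maximality together with antisymmetry to deduce $\One_\V \to Q$, whence $Q$ holds. Propositional extensionality then turns the logical equivalence between $P_\U$ and $Q$ into an equivalence of types $P_\U \simeq Q$, proving in particular that $P_\U$ has size $\V$. To upgrade $Q$ from merely maximal to greatest, I would take an arbitrary $(Q',f') \in \lifting_\V(P_\U)$, hypothesise $q' : Q'$, and re-run the previous argument using the witness $f'(q') : P_\U$ in place of $p$ to conclude $Q' \to Q$, i.e.\ $(Q',f') \leq (Q,f)$.

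The converse direction is nearly immediate: from $P_\U \simeq Q$ I would extract a map $g : Q \to P_\U$ so that $(Q,g)$ lies in $\lifting_\V(P_\U)$, and for an arbitrary $(Q',f') \in \lifting_\V(P_\U)$ compose $f'$ with the inverse direction $P_\U \to Q$ to obtain $Q' \to Q$, i.e.\ $(Q',f') \leq (Q,g)$. I do not foresee any real obstacle; the mildly subtle point is that the top-like element $(\One_\V,\,\star \mapsto p)$ can only be constructed once a witness $p : P_\U$ is available, which is precisely what makes maximality powerful enough to extract such a witness back out and identify it with a proof of $Q$.
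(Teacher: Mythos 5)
Your proof is correct and follows essentially the same route as the paper: use \(Q \to P_\U\) from the definition of the lifting, and when \(P_\U\) holds compare the maximal \(Q\) against the top-like element built from \(\One_\V\) to conclude that \(Q\) holds, after which greatestness and the converse direction amount to observing that every \((Q',f')\) satisfies \(Q' \to Q\). One cosmetic remark: passing from the logical equivalence of \(P_\U\) and \(Q\) to the type equivalence \(P_\U \simeq Q\) uses only that both are propositions (any logical equivalence of subsingletons is an equivalence), rather than propositional extensionality, which in any case could not identify propositions living in the different universes \(\U\) and \(\V\).
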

\begin{proof}
  Suppose that \(\lifting_{\V}(P_\U)\) has a maximal element \(Q :
  \Omega_\V\). We wish to show that \(Q \simeq P_\U\). By definition of
  \(\lifting_{\V}(P_\U)\), we already have that \(Q \to P_\U\). So only the
  converse remains. Therefore suppose that \(P_\U\) holds. Then, \(\One_\V\) is
  an element of \(\lifting_{\V}(P_\U)\). Obviously \(Q \to 1_\V\), but \(Q\) is
  maximal, so actually \(Q = 1_\V\), that is, \(Q\) holds, as
  desired. Thus, \(Q \simeq P_\U\). It~is then straightforward to see that \(Q\)
  is actually the greatest element of \(\lifting_{\V}(P_\U)\), since
  \(\lifting_{\V}(P_\U) \simeq \sum_{Q' : \Omega_\V}(Q' \to Q)\).
  For the converse, assume that \(P_\U\) is equivalent to a proposition
  \(Q : \Omega_\V\). Then, as before,
  \(\lifting_{\V}(P_\U) \simeq \sum_{Q' : \Omega_\V}(Q' \to Q)\), which shows
  that \(Q\) is indeed the greatest element of~\(\lifting_{\V}(P_\U)\).
\end{proof}

\begin{corollary}
  Let \(P_{\U}\) be a proposition in \(\U\). The \(\V\)-sup-lattice
  \(\lifting_{\V}(P_{\U})\) has all \(\V\)-infima if and only if
  \(P_{\U}\)~has~size~\(\V\).
\end{corollary}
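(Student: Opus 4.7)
The plan is to derive both directions from \cref{maximal-iff-resize}, using that a greatest element is a fortiori a maximal one.

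For the forward direction, suppose $\lifting_{\V}(P_{\U})$ has all $\V$-infima. Then, in particular, the infimum of the $\V$-indexed empty family (indexed by $\Zero_{\V}$) exists and is the greatest element of $\lifting_{\V}(P_{\U})$. Being greatest it is also maximal, so \cref{maximal-iff-resize} immediately gives that $P_{\U}$ has size $\V$.

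For the converse, assume $P_{\U}$ is equivalent to some $Q : \Omega_{\V}$. By the second half of \cref{maximal-iff-resize}, we have an equivalence $\lifting_{\V}(P_{\U}) \simeq \sum_{Q' : \Omega_{\V}}\pa*{Q' \to Q}$, so it suffices to check that this down-set of $Q$ inside $\Omega_{\V}$ is closed under $\V$-indexed infima. Recalling that $\Omega_{\V}$ has all $\V$-infima (given by universal quantification), for $\alpha : I \to \lifting_{\V}(P_{\U})$ with $I : \V$ and $\alpha_i = (Q'_i,h_i)$, I~would define $\bigwedge_i \alpha_i \colonequiv \pa*{\forall_{i : I} Q'_i} \wedge Q$, note that it lies in the down-set because it implies $Q$, and then verify directly that it is the greatest lower bound: it is below every $\alpha_i = Q'_i$ since $\forall_j Q'_j \to Q'_i$, and any $S$ in the down-set that implies each $Q'_i$ implies $\forall_i Q'_i$ and also implies $Q$, hence implies the meet.

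The one subtlety, and what motivates meeting with $Q$ explicitly, is the possibly-empty family: when $I$ is inhabited, any witness $h_i$ already shows $\forall_{i : I} Q'_i \to Q$, but without such a witness this implication is not automatic, and the greatest lower bound must instead be $Q$ itself rather than the top of $\Omega_{\V}$. Conjoining with $Q$ handles both situations uniformly, avoiding any case analysis on the inhabitedness of $I$.
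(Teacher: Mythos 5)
Your proof is correct and follows essentially the same route as the paper: the forward direction uses the infimum of the empty family as the greatest (hence maximal) element together with \cref{maximal-iff-resize}, and your meet \(\pa*{\forall_{i : I} Q'_i} \wedge Q\) is exactly the paper's infimum \(\pa*{Q \times \Pi_{i : I}\alpha_i}\), just written with the conjuncts swapped. Your extra remark about why conjoining with \(Q\) handles the empty family is a point the paper leaves implicit, but it is the same construction.
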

\begin{proof}
  Suppose first that \(\lifting_{\V}(P_{\U})\) has all \(\V\)-infima. Then it
  must have a infimum for the empty family
  \(\Zero_{\V} \to \lifting_{\V}(P_{\U})\). But this infimum must be the greatest
  element of \(\lifting_{\V}(P_{\U})\). So by \cref{maximal-iff-resize} the
  proposition \(P_{\U}\) must have size \(\V\).

  Conversely, suppose that \(P_{\U}\) is equivalent to a proposition \(Q : \V\).
  Then the infimum of a family \(\alpha : I \to \lifting_{\V}(P_{\U})\) with
  \(I : \V\) is given by \(\pa*{Q \times \Pi_{i : I} \alpha_i} : \V\).
\end{proof}

\begin{definition}[\(\Zorn{\V}{\U}{\T}\)]
  Let \(\U\), \(\V\) and \(\T\) be
  universes. \(\text{\emph{Zorn's-Lemma}}_{\V,\U,\T}\) asserts that every
  pointed \(\V\)-dcpo with carrier in \(\U\) and order taking values in \(\T\)
  (cf.\ \cite{deJongEscardo2021}) has a maximal element.
\end{definition}
It important to note that Zorn's lemma does \emph{not} imply the Axiom of Choice
in the absence of excluded middle~\cite{Bell1997}. If it did, then the following
would be useless, since the Axiom of Choice implies excluded middle, which in
turn implies propositional resizing.
\begin{theorem}\label{Zorn-implies-Propositional-Resizing}
  \(\Zorn{\V}{\V^+\sqcup\U}{\V}\) implies \(\Propresizing{\U}{\V}\).
\end{theorem}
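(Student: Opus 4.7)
The plan is to apply Zorn's lemma to the lifting construction of \cref{lifting-of-prop}, extract a maximal element, and then invoke \cref{maximal-iff-resize} to conclude that the given proposition has size $\V$. So fix an arbitrary proposition $P_\U$ in $\U$ and consider the poset $L \colonequiv \lifting_\V(P_\U) = \sum_{Q : \Omega_\V}(Q \to P_\U)$, ordered by $(Q,f) \le (Q',f')$ iff $Q \to Q'$.

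First I would check that this poset meets the hypotheses of $\Zorn{\V}{\V^+\sqcup\U}{\V}$. Since $\Omega_\V : \V^+$ and $Q \to P_\U$ lives in $\V \sqcup \U$, the carrier $L$ lives in $\V^+ \sqcup \U$. The order $Q \to Q'$ lives in $\V$, as required. Pointedness is witnessed by $(\Zero_\V, !)$, which is the least element since $\Zero_\V \to Q$ for every $Q$. For $\V$-directed completeness, given a directed family $\alpha : I \to L$ with $I : \V$ and $\alpha(i) = (Q_i, f_i)$, the supremum is $\bigl(\exists_{i : I} Q_i,\ g\bigr)$, where $g$ is defined by propositional-truncation-induction out of $\exists_{i : I} Q_i$ into the proposition $P_\U$: given $(i, q_i)$, return $f_i(q_i)$. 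This is well-defined precisely because $P_\U$ is a proposition. (This is the same $\V$-sup-lattice structure noted immediately after \cref{lifting-of-prop}.)

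Applying $\Zorn{\V}{\V^+\sqcup\U}{\V}$ to $L$ yields a maximal element $(Q, f)$ of $L$, so by \cref{maximal-iff-resize} the proposition $Q : \Omega_\V$ is logically equivalent to $P_\U$. Hence $P_\U$ has size $\V$. Since $P_\U$ was an arbitrary proposition in $\U$, this establishes $\Propresizing{\U}{\V}$.

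I do not anticipate a substantive obstacle: the real work is already packaged in \cref{maximal-iff-resize}, which converts maximality in the lifting into size-$\V$-ness of $P_\U$. The only thing that requires care is the universe bookkeeping, in particular checking that the carrier of $\lifting_\V(P_\U)$ lands exactly in $\V^+ \sqcup \U$ (forcing the choice of parameters in the Zorn hypothesis) and that the order stays in $\V$.
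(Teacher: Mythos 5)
Your proposal is correct and follows essentially the same route as the paper: apply Zorn's lemma to \(\lifting_{\V}(P_\U)\), whose carrier lies in \(\V^+\sqcup\U\) with order in \(\V\), and convert the resulting maximal element into smallness of \(P_\U\) via \cref{maximal-iff-resize}. The only difference is that you spell out the pointed \(\V\)-dcpo structure on the lifting explicitly, which the paper treats as an observation made immediately after \cref{lifting-of-prop}.
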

In particular, \(\Zorn{\V}{\V^+}{\V}\) implies \(\Propresizing{\V^+}{\V}\).
\begin{proof}
  Suppose that \(\Zorn{\V}{\V^+\sqcup\U}{\V}\) were true. Then
  \(\lifting_{\V}(P) : \V^+\sqcup\U\) has a maximal element for every
  \(P : \Omega_{\U}\). Hence, by \cref{maximal-iff-resize}, every
  \(P : \Omega_{\U}\) has size \(\V\).
\end{proof}

We can also use \cref{maximal-iff-resize} to show that the following version of
Tarski's fixed point theorem \cite{Tarski1955} is not available predicatively.
\begin{definition}[\(\Tarski{\V}{\U}{\T}\)]
  The assertion \(\text{\emph{Tarski's-Theorem}}_{\V,\U,\T}\) says that every
  monotone endofunction on a \(\V\)-sup-lattice with carrier in a universe
  \(\U\) and order taking values in a universe \(\T\) has a greatest fixed
  point.
\end{definition}
\begin{theorem}
  \(\Tarski{\V}{\V^+\sqcup\U}{\V}\) implies \(\Propresizing{\U}{\V}\).
\end{theorem}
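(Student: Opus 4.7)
The plan is to parallel the proof of \cref{Zorn-implies-Propositional-Resizing}: given an arbitrary proposition $P : \Omega_\U$, I would apply the assumed instance of Tarski's theorem to the lifting $\lifting_\V(P)$ equipped with the identity endofunction, and then invoke \cref{maximal-iff-resize} to conclude that $P$ has size $\V$. The key observation that makes this work is that the identity on any poset is a monotone endofunction whose greatest fixed point, should one exist, is automatically the greatest element of the poset, since every element is trivially a fixed point of the identity.

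Before applying the hypothesis, I would check the universe bookkeeping. The carrier $\lifting_\V(P) = \sum_{Q : \Omega_\V}(Q \to P)$ lives in $\V^+ \sqcup \U$, because $\Omega_\V : \V^+$ and the function type $Q \to P$ belongs to $\V \sqcup \U$. The order, inherited from implication in $\Omega_\V$, takes values in $\V$. As already remarked immediately after \cref{lifting-of-prop}, $\lifting_\V(P)$ is closed under $\V$-suprema (computed as existential quantifications) and contains a least element, so it is a $\V$-sup-lattice with exactly the universe parameters required by $\Tarski{\V}{\V^+\sqcup\U}{\V}$.

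Applying the assumed hypothesis to the monotone identity map on $\lifting_\V(P)$ then yields a greatest fixed point, which by the observation above is in fact the greatest, hence a maximal, element of $\lifting_\V(P)$. Feeding this maximal element into \cref{maximal-iff-resize} gives $P \hassize \V$, establishing $\Propresizing{\U}{\V}$. I do not foresee any serious obstacle; the argument is a minor variation on the Zorn's lemma proof, the only subtlety being the universe arithmetic which I have already verified above.
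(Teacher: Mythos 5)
Your proposal is correct and matches the paper's own proof: both apply the assumed instance of Tarski's theorem to the identity map on \(\lifting_{\V}(P)\), observe that the greatest fixed point of the identity is the greatest element, and conclude via \cref{maximal-iff-resize} that \(P\) has size \(\V\). Your additional universe-level bookkeeping is accurate and consistent with the paper's use of \(\lifting_{\V}(P) : \V^+ \sqcup \U\) with \(\V\)-valued order.
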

In particular, \(\Tarski{\V}{\V^+}{\V}\) implies \(\Propresizing{\V^+}{\V}\).
\begin{proof}
  Suppose that \(\Tarski{\V}{\V^+\sqcup\U}{\V}\) were true and let
  \(P : \Omega_{\U}\) be arbitrary. Consider the \(\V\)-sup-lattice
  \(\lifting_{\V}(P) : \V^+ \sqcup \U\). By assumption, the identity map on this
  poset has a greatest fixed point, but this must be the greatest element of
  \(\lifting_{\V}(P)\), which implies that \(P\)~has~size~\(\V\) by
  \cref{maximal-iff-resize}.
\end{proof}

Another famous fixed point theorem, for dcpos this time, is due to
Pataraia~\cite{Pataraia1997,Escardo2003} which says that every monotone
endofunction on a pointed dcpo has a least fixed point. (A dcpo is called
pointed if it has a least element.)
A crucial step in proving Pataraia's theorem is the observation that every dcpo
has a greatest monotone inflationary endofunction. (An endomap \(f : {X \to X}\)
is inflationary when \(x \below f(x)\) for every \(x : X\).)  We refer to this
intermediate result as Pataraia's lemma.

\begin{definition}[\(\Pataraia{\V}{\U}{\T}\), \(\PataraiaThm{\V}{\U}{\T}\)]
  \hfill
  \begin{enumerate}[(i)]
  \item \(\PataraiaThm{\V}{\U}{\T}\) says that every monotone endofunction on a
    pointed \(\V\)-dcpo with carrier in a universe \(\U\) and order taking
    values in a universe \(\T\) has a least fixed~point.
  \item \(\text{\emph{Pataraia's-Lemma}}_{\V,\U,\T}\) says that every
    \(\V\)-dcpo with carrier in a universe \(\U\) and order taking values in a
    universe \(\T\) has a greatest monotone inflationary endofunction.
  \end{enumerate}
\end{definition}
A careful analysis of the proof in~\cite[Section~2]{Escardo2003} shows that in
our predicative setting we can still prove that
\(\Pataraia{\V}{\U\sqcup\T}{\U\sqcup\T}\) implies
\(\PataraiaThm{\V}{\U}{\T}\). However, Pataraia's lemma is not available
predicatively.
\begin{theorem}\label{Pataraia-implies-Propositional-Resizing}
  \(\Pataraia{\V}{\V^+\sqcup\U}{\V}\) implies \(\Propresizing{\U}{\V}\).
\end{theorem}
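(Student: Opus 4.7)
The plan is to exploit the lifting construction $\lifting_{\V}(P_{\U})$, exactly as in the Zorn and Tarski cases, but to extract a top element from the greatest monotone inflationary endofunction supplied by Pataraia's lemma.

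Let $P_{\U} : \Omega_{\U}$ be arbitrary. As noted after \cref{lifting-of-prop}, the poset $\lifting_{\V}(P_{\U})$ is a $\V$-sup-lattice; in particular it is a pointed $\V$-dcpo. A quick universe check: its carrier $\sum_{Q : \Omega_{\V}}(Q \to P_{\U})$ lies in $\V^{+} \sqcup \U$ since $\Omega_{\V} : \V^{+}$ and $Q \to P_{\U}$ lies in $\V \sqcup \U$, while its order (implication of propositions in $\V$) takes values in $\V$. So $\Pataraia{\V}{\V^{+}\sqcup\U}{\V}$ applies and yields a greatest monotone inflationary endofunction $f : \lifting_{\V}(P_{\U}) \to \lifting_{\V}(P_{\U})$.

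The key step is the following observation: in any $\V$-sup-lattice $L$, each element $a : L$ induces a monotone inflationary endofunction $g_{a} : L \to L$ defined by $g_{a}(x) \colonequiv x \vee a$. By maximality of $f$ among monotone inflationary endofunctions, $g_{a}(x) \below f(x)$ for every $x$ and every $a$. Taking $x \colonequiv \bot$ (the least element of $\lifting_{\V}(P_{\U})$), we obtain $a \below a \vee \bot = g_{a}(\bot) \below f(\bot)$ for every $a : \lifting_{\V}(P_{\U})$. Hence $f(\bot)$ is an upper bound for the whole poset, so it is the greatest, and in particular a maximal, element of $\lifting_{\V}(P_{\U})$.

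We can then invoke \cref{maximal-iff-resize} to conclude that $P_{\U}$ has size $\V$. Since $P_{\U} : \Omega_{\U}$ was arbitrary, this establishes $\Propresizing{\U}{\V}$. The only subtle point is the universe bookkeeping and the recognition that the $\vee$-with-$a$ trick works uniformly: this is the same idea that shows every sup-lattice has a top element when Pataraia's lemma is available, and here it is packaged with the lifting construction to turn "top element of $\lifting_{\V}(P_{\U})$" into "resizing of $P_{\U}$".
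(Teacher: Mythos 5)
Your proposal is correct and follows essentially the same route as the paper: apply Pataraia's lemma to \(\lifting_{\V}(P_{\U})\), use the join-with-a-fixed-element endomap (which is monotone and inflationary) compared against the greatest such endomap at the bottom element, and conclude via \cref{maximal-iff-resize}. The only cosmetic difference is that you show \(f(\bot)\) is the greatest element by quantifying over all \(a\), whereas the paper only verifies maximality by considering \(Q\) above \(g(\Zero_\V)\); both variants are fine.
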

In particular, \(\Pataraia{\V}{\V^+}{\V}\) implies \(\Propresizing{\V^+}{\V}\).
\begin{proof}
  Suppose that \(\Pataraia{\V}{\V^+\sqcup\U}{\V}\) were true and let
  \(P : \Omega_{\U}\) be arbitrary. Consider the \(\V\)-dcpo
  \(\lifting_{\V}(P) : \V^+ \sqcup \U\). By assumption, it has a greatest
  monotone inflationary endomap \(g : \lifting_{\V}(P) \to \lifting_{\V}(P)\).
  We claim that \(g(\Zero_{\V})\) is a maximal element of \(\lifting_{\V}(P)\),
  which would finish the proof by \cref{maximal-iff-resize}. So suppose that we
  have \(Q : \lifting_{\V}(P)\) with \(g\pa*{\Zero_\V} \below Q\). Then we must
  show that \(Q \below g\pa*{\Zero_\V}\). Define
  \(f_Q : \lifting_{\V}(P) \to \lifting_{\V}(P)\) by \(Q' \mapsto Q' \vee
  Q\). Note that \(f_Q\) is monotone and inflationary, so that \(f_Q \below g\).
  Hence, \(Q = f_Q\pa*{\Zero_\V} \below g\pa*{\Zero_\V}\), as desired.
\end{proof}

\begin{remark}
  For a \emph{single} universe \(\U\), the usual proofs (see
  resp.~\cite{Tarski1955} and ~\cite[Section~2]{Escardo2003}) of
  \(\Tarski{\U}{\U}{\U}\), \(\Pataraia{\U}{\U}{\U}\) and (hence)
  \(\PataraiaThm{\U}{\U}{\U}\) are also valid in our predicative setting.
  However, in light of \cref{nontrivial-impredicativity}, these statements are
  not useful predicatively, because one would never be able to find interesting
  examples of posets to apply the statements to.
\end{remark}
Finally, we note that Zorn's lemma implies Pataraia's lemma with the following
universe parameters. Together with
\cref{Pataraia-implies-Propositional-Resizing} this yields another proof
that \(\Zorn{\V}{\V^+}{\V}\) implies \(\Propresizing{\V^+}{\V}\).
\begin{lemma}
  \(\Zorn{\V}{\U\sqcup\T}{\U\sqcup\T}\) implies \(\Pataraia{\V}{\U}{\T}\).
\end{lemma}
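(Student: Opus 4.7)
The plan is to apply Zorn's lemma to the poset of monotone inflationary endofunctions on a given $\V$-dcpo, ordered pointwise, and then upgrade the resulting maximal element to a greatest one.

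Fix a $\V$-dcpo $D$ with carrier $|D| : \U$ and order taking values in $\T$. Let
\[
  M(D) \colonequiv \sum_{f : |D| \to |D|}\pa*{\text{$f$ is monotone}} \times \pa*{\text{$f$ is inflationary}},
\]
ordered pointwise by $f \leq_{M(D)} g \colonequiv \Pi_{x : |D|}(f(x) \below g(x))$. Since $|D| : \U$ and ${\below}$ takes values in $\T$, the carrier $M(D)$ lives in $\U \sqcup \T$ and the order lives in $\U \sqcup \T$, so the universe parameters will be right for applying $\Zorn{\V}{\U \sqcup \T}{\U \sqcup \T}$.

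Next I verify that $M(D)$ is a pointed $\V$-dcpo. The identity function $\id_{|D|}$ is monotone and inflationary, and it is pointwise below every inflationary endofunction, so it is the least element. Given a $\V$-directed family $\alpha : I \to M(D)$, the pointwise supremum $(\bigvee \alpha)(x) \colonequiv \bigvee_{i : I} \alpha_i(x)$ exists because $\{\alpha_i(x)\}_{i : I}$ is a $\V$-directed family in $D$; monotonicity and the inflationary property are preserved (the latter uses inhabitedness of $I$, which is part of directedness). So $\Zorn{\V}{\U \sqcup \T}{\U \sqcup \T}$ gives a maximal $f \in M(D)$.

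The final step is to show that any maximal element $f$ of $M(D)$ is in fact the greatest. Let $g \in M(D)$ be arbitrary. Then $g \circ f$ is again monotone and inflationary, and $f \leq_{M(D)} g \circ f$ pointwise because $g$ is inflationary, i.e.\ $f(x) \below g(f(x))$ for every $x$. By maximality of $f$ we conclude $f = g \circ f$, so $g(f(x)) = f(x)$ for every $x$. Using monotonicity of $g$ and inflationarity of $f$, we get $g(x) \below g(f(x)) = f(x)$, hence $g \leq_{M(D)} f$. Thus $f$ is the greatest monotone inflationary endofunction on $D$, which is exactly $\Pataraia{\V}{\U}{\T}$.

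The only delicate step is the last one, where the pointwise maximality has to be converted into a genuine upper bound via the composition trick $f \leq g \circ f$ together with inflationarity and monotonicity of $g$; the rest is routine bookkeeping of the universe levels and checking that the pointwise structure on $M(D)$ lifts the dcpo structure of $D$.
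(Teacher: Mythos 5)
Your proposal is correct and follows essentially the same route as the paper: apply Zorn's lemma to the pointed \(\V\)-dcpo of monotone inflationary endomaps ordered pointwise (with the identity as least element), and upgrade the maximal element to a greatest one via the same composition trick, \(f \below g \circ f\) by inflationarity of \(g\), hence \(f = g \circ f\) by maximality, and then \(g \below f\) by monotonicity of \(g\) and inflationarity of \(f\). Your extra checks (pointwise directed suprema, inhabitedness of the index type for the inflationary property, universe bookkeeping) are details the paper leaves implicit, but the argument is the same.
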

\begin{proof}
  Assume \(\Zorn{\V}{\U\sqcup\T}{\U\sqcup\T}\) and let \(D : \U\) be \(\V\)-dcpo
  with order taking values in~\(\T\). Consider the type \(\MI_D\) of monotone
  and inflationary endomaps on \(D\). We can order these maps pointwise to get a
  \(\V\)-dcpo with carrier and order taking values in \(\U\sqcup\T\). Finally,
  \(\MI_D\) has a least element: the identity map.  Hence, by our assumption, it
  has a maximal element \(g : D \to D\). It remains to show that \(g\) is in
  fact the greatest element. To this end, let \(f : D \to D\) be an arbitrary
  monotone inflationary endomap on \(D\). We must show that \(f \below
  g\). Since \(f\) is inflationary, we have \(g \below f \circ g\). So by
  maximality of \(g\), we get \(g = f \circ g\). But \(f\) is monotone and \(g\)
  is inflationary, so \(f \below f \circ g = g\), finishing the proof.
\end{proof}
The answer to the question whether Pataraia's theorem (or similarly, a least
fixed point theorem version of Tarki's theorem) is inherently impredicative or
(by contrast) does admit a predicative proof has eluded us thus far.

\section{Families and Subsets}
\label{sec:families-and-subsets}
In traditional impredicative foundations, completeness of posets is usually
formulated using subsets. For instance, dcpos are defined as posets \(D\) such
that every directed subset \(D\) has a supremum in
\(D\). \cref{examples-of-delta-complete-posets} are all formulated using small
families instead of subsets. While subsets are primitive in set theory, families
are primitive in type theory, so this could be an argument for using families
above. However, that still leaves the natural question of how the family-based
definitions compare to the usual subset-based definitions, especially in our
predicative setting, unanswered. This section aims to answer this question. We
first study the relation between subsets and families predicatively and then
clarify our definitions in the presence of impredicativity.
In our answers we will consider sup-lattices, but similar arguments could be
made for posets with other sorts of completeness, such as dcpos.

\paragraph*{All Subsets} We first show that simply asking for completeness
w.r.t.\ all subsets is not satisfactory from a predicative viewpoint.  In fact,
we will now see that even asking for all subsets \(X \to \Omega_{\T}\) for some
fixed universe \(\T\) is problematic from a predicative standpoint.

\begin{theorem}
  \label{all-T-subsets-resizing}
  Let \(\U\) and \(\V\) be universes and fix a proposition
  \(P_{\U} : \U\). Recall \(\lifting_{\V}(P_{\U})\) from~\cref{lifting-of-prop},
  which has \(\V\)-suprema.
  Let \(\T\) be any type universe.  If \(\lifting_{\V}(P_{\U})\) has suprema for
  all subsets \({\lifting_{\V}(P_U) \to \Omega_{\T}}\), then \(P_{\U}\) has size
  \(\V\) independently of \(\T\).
\end{theorem}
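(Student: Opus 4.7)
The plan is to apply \cref{maximal-iff-resize} by producing a maximal (indeed, greatest) element of $\lifting_{\V}(P_{\U})$ from the assumed subset-completeness, using a trivially defined subset.

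Specifically, I would consider the ``total'' subset $A : \lifting_{\V}(P_{\U}) \to \Omega_{\T}$ given by $A(Q) \colonequiv \One_{\T}$ for every $Q : \lifting_{\V}(P_{\U})$. This is a well-defined subset valued in $\Omega_{\T}$, and by the hypothesis on $\lifting_{\V}(P_{\U})$ it has a supremum, call it $S : \lifting_{\V}(P_{\U})$. I would then argue that $S$ is the greatest element of $\lifting_{\V}(P_{\U})$: every $Q : \lifting_{\V}(P_{\U})$ satisfies $A(Q)$ (trivially), so $Q$ belongs to the subset whose supremum is $S$, whence $Q \below S$. In particular $S$ is maximal.

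Applying \cref{maximal-iff-resize} to this maximal element $S$ then yields that $P_{\U}$ has size $\V$. Since the construction of $A$ and of $S$ only uses the assumption for the specific universe $\T$ at hand, but the conclusion ``$P_{\U}$ has size $\V$'' makes no reference to $\T$, the size bound is independent of $\T$, as claimed.

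I do not anticipate any real obstacle: the only subtlety is making sure the trivial subset is correctly typed (taking the always-true subset valued in $\Omega_{\T}$, via $\One_{\T}$, rather than $\One_{\V}$ or $\One_{\U}$) so that we are genuinely invoking the hypothesis at the given $\T$. Everything else reduces to the previously established \cref{maximal-iff-resize}.
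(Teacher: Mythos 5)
Your proposal is correct and matches the paper's own proof: both take the always-true subset $Q \mapsto \One_{\T}$ of $\lifting_{\V}(P_{\U})$, observe that its supremum is a greatest (hence maximal) element, and conclude via \cref{maximal-iff-resize} that $P_{\U}$ has size $\V$, independently of $\T$.
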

\begin{proof}
  Let \(\T\) be a type universe and consider the subset \(S\) of
  \(\lifting_{\V}(P_\U)\) given by \(Q \mapsto \One_\T\).
  Note that \(S\) has a supremum in \(\lifting_{\V}(P_\U)\) if and only if
  \(\lifting_{\V}(P_\U)\) has a greatest element, but
  by~\cref{maximal-iff-resize}, the latter is equivalent to \(P_\U\) having size
  \(\V\).
\end{proof}

\paragraph*{All Subsets Whose Total Spaces Have Size \(\V\)}
The proof above illustrates that if we have a subset
\(S : {X \to \Omega_{\T}}\), then there is no reason why the total space
\(\sum_{x : X} x \in S \colonequiv \sum_{x : X}\pa*{S(x) \text{ holds}}\) should
have size \(\T\). In fact, for \(S(x) \colonequiv \One_{\T}\) as above, the
latter is equivalent to asking that \(X\) has size \(\T\).

\begin{definition}[Total space of a subset, \(\totalspace\)]
  Let \(\T\) be a universe, \(X\) a type and \(S : {X \to \Omega_{\T}}\) a
  subset of \(X\). The \emph{total space} of \(S\) is defined as
  \(
    \totalspace(S) \colonequiv \sum_{x : X} x \in S.
  \)
\end{definition}
A naive attempt to solve the problem described in \cref{all-T-subsets-resizing}
would be to stipulate that a \(\V\)-sup-lattice \(X\) should have suprema for
all subsets \(S : {X \to \Omega_{\V}}\) for which \(\mathbb T(S)\)
has~size~\(\V\). Somewhat less naively, we might be more liberal and ask for
suprema of subsets \(S : {X \to \Omega_{\U\sqcup\V}}\) for which
\(\mathbb T(S)\) has size \(\V\). Here the carrier of \(X\) is in a universe
\(\U\).
Perhaps surprisingly, even this more liberal definition is too weak to be useful
as the following example shows.

\begin{example}[Naturally occurring subsets whose total spaces are not necessarily small]
  \label{natural-example-total-space}
  Let~\(X\) be a poset with carrier in \(\U\)
  and suppose that it has suprema for all (directed) subsets
  \(S : {X \to \Omega_{\U\sqcup\V}}\) for which \(\totalspace(S)\) has
  size \(\V\).
  Now let \(f : X \to X\) be a Scott continuous endofunction on \(X\). We would
  want to construct the least fixed point of \(f\) as the supremum of the
  directed subset \(S \colonequiv \{\bot,f(\bot),f^2(\bot),\dots\}\). Now, how
  do we show that its total space
  \(\totalspace(S) \equiv \sum_{x : X} \pa*{\exists_{n : \Nat}\,x = f^n(\bot)}\)
  has size \(\V\)? A first guess might be that \(\Nat \simeq \totalspace(S)\),
  which would do the job. However, it's possible that
  \(f^m(\bot) = f^{m+1}(\bot)\) for some natural number~\(m\), which would mean
  that \(\totalspace(S) \simeq \Fin(m)\) for the least such \(m\).  The problem
  is that in the absence of decidable equality on \(X\) we might not be able to
  decide which is the case. But \(X\) seldom has decidable equality, as we saw
  in~\cref{nontrivial-weak-em,positive-em}.
\end{example}

\begin{remark}
  The example above also makes clear that it is undesirable to impose an
  injectivity condition on families, as the family
  \(\Nat \to X, n \mapsto f^n(\bot)\) is not necessarily injective.
  In fact, for every type \(X : \U\) there is an equivalence between embeddings
  \(I \hookrightarrow X\) with \(I : \V\) and subsets of \(X\) whose total
  spaces have size \(\V\), cf.\
  \cite[\href{https://www.cs.bham.ac.uk/~mhe/agda-new/Slice.html\#\%F0\%9D\%93\%95-equiv}{\texttt{Slice.html}}]{TypeTopology}.
\end{remark}

\paragraph*{All \(\V\)-covered Subsets}
The point of \cref{natural-example-total-space} is analogous to the difference
between Bishop finiteness and Kuratowski finiteness. Inspired by this, we make
the following definition.

\begin{definition}[\(\V\)-covered subset]
  Let \(X\) be a type, \(\T\) a universe and \(S : X \to \Omega_\T\) a subset of
  \(X\). We say that \(S\) is \emph{\(\V\)-covered} for a universe \(\V\) if we
  have a type \(I : \V\) with a surjection \(e : I \surj \totalspace(S)\).
\end{definition}

In the example above, the subset
\(S \colonequiv \{\bot,f(\bot),f^2(\bot),\dots\}\) is \(\U_0\)-covered, because
\(\Nat \surj \totalspace(S)\).

\begin{theorem}
  \label{family-subset-equiv}
  For \(X : \U\) and any universe \(\V\) we have an equivalence between
  \(\V\)-covered subsets \(X \to \Omega_{\U \sqcup \V}\) and families
  \(I \to X\) with \(I : \V\).
\end{theorem}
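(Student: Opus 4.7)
The plan is to exhibit the equivalence directly, sending a family to its image-subset and a covered subset to its first-projection family. Given a family $\alpha : I \to X$ with $I : \V$, define the subset $S_\alpha : X \to \Omega_{\U \sqcup \V}$ by $S_\alpha(x) \colonequiv \exists_{i : I}\,\alpha(i) = x$ (which inhabits $\U \sqcup \V$ since the equality lies in $\U$ and $I$ in $\V$), together with $e_\alpha : I \to \totalspace(S_\alpha)$ sending $i \mapsto (\alpha(i), |(i,\mathrm{refl})|)$. The map $e_\alpha$ is a surjection essentially tautologically: an inhabitant $p$ of $\exists_i\,\alpha(i) = x$ yields, propositionally, an index $i$ with $\alpha(i) = x$, and then $e_\alpha(i) = (x, p)$ in $\totalspace(S_\alpha)$ because the second component lives in the proposition $S_\alpha(x)$. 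In the reverse direction, send a $\V$-covered subset $(S, I, e)$ to the family $\mathrm{pr}_1 \circ e : I \to X$.

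The round-trip family$\to$subset$\to$family returns $(I, \mathrm{pr}_1 \circ e_\alpha) = (I, \alpha)$ definitionally. For the other round-trip, $(S, I, e)$ is sent to a covered subset whose underlying subset is $S'(x) \colonequiv \exists_i\,\mathrm{pr}_1(e(i)) = x$. I reduce $S' = S$ to a logical equivalence via propositional extensionality: the forward direction transports $\mathrm{pr}_2(e(i)) : S(\mathrm{pr}_1(e(i)))$ along the equality $\mathrm{pr}_1(e(i)) = x$; the backward direction uses surjectivity of $e$ to extract, from $(x, p) \in \totalspace(S)$, some $i$ with $e(i) = (x, p)$ and hence $\mathrm{pr}_1(e(i)) = x$. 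Function extensionality then upgrades pointwise equality to $S' = S$, and under the induced equivalence $\totalspace(S) \simeq \totalspace(S')$ the surjections $e$ and $e_{\mathrm{pr}_1 \circ e}$ identify because pairs with equal first components in $\totalspace(S')$ are automatically equal.

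The main obstacle is purely bookkeeping: one must check that the dependent transports across $\Sigma$-types line up so that the two surjections identify after $S = S'$. This is routine precisely because subsets are proposition-valued. Conceptually, the content is the familiar correspondence in univalent foundations between small-indexed families into $X$ and subtypes of $X$ whose total spaces are small, but phrased via surjectivity rather than injectivity, which is exactly what accommodates arbitrary (not necessarily injective) families such as the Kleene iteration sequence discussed in \cref{natural-example-total-space}.
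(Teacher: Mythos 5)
Your proposal is correct and follows essentially the same route as the paper: the same pair of maps (family $\mapsto$ image-subset with the tautological cover, covered subset $\mapsto$ first-projection family), one composite holding definitionally, and the other reduced via propositional and function extensionality to equality of the subsets plus the observation that covers into a proposition-valued total space are determined by their first components, which is exactly the paper's intermediate claim proved by path induction.
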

\begin{proof}
  The forward map \(\varphi\) is given by \((S,I,e) \mapsto (I,\fst \circ e)\).
  In the other direction, we define \(\psi\) by mapping \((I,\alpha)\) to the
  triple \((S,I,e)\) where \(S\) is the subset of \(X\) given by
  \(S(x) \colonequiv \exists_{i : I}\,x = \alpha(i)\) and
  \(e : I \surj \totalspace(S)\) is defined as
  \(e (i) \colonequiv \pa*{\alpha(i),\tosquash*{(i,\refl)}}\).
  The composite \(\varphi \circ \psi\) is easily seen to be equal to the
  identity. To show that \(\psi \circ \varphi\) equals the identity, we need the
  following intermediate result, which is proved using function extensionality
  and path induction.
  \begin{claim*}
    Let \(S,S' : X \to \Omega_{\U\sqcup\V}\), \(e : I \to \totalspace(S)\)
    and \(e' : I \to \totalspace(S')\). If \(S = S'\) and \({{\fst} \circ e \sim
    {\fst} \circ e'}\), then \((S,e) = (S',e')\).
  \end{claim*}
  The result then follows from the claim using function extensionality and
  propositional extensionality.
\end{proof}

\begin{corollary}
  \label{family-subset-sup-equiv}
  Let \(X\) be a poset with carrier in \(\U\) and let \(\V\) be any universe.
  Then \(X\) has suprema for all \(\V\)-covered subsets
  \(X \to \Omega_{\U\sqcup\V}\) if and only if \(X\) has suprema for all
  families \(I \to X\) with \(I : \V\).
\end{corollary}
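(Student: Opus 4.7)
The plan is to leverage Theorem~\ref{family-subset-equiv}, which already establishes a bijection between \(\V\)-covered subsets \(X \to \Omega_{\U\sqcup\V}\) and \(\V\)-indexed families \(I \to X\); all that remains is to verify that suprema are preserved under this correspondence.

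I would begin with the forward direction. Assume \(X\) has suprema for all \(\V\)-covered subsets, and let \(\alpha : I \to X\) be a family with \(I : \V\). Apply \(\psi\) from the proof of Theorem~\ref{family-subset-equiv} to obtain the subset \(S(x) \colonequiv \exists_{i : I}\,x = \alpha(i)\) together with the surjection \(e : I \surj \totalspace(S)\) given by \(i \mapsto (\alpha(i),\tosquash*{(i,\refl)})\), which makes \(S\) a \(\V\)-covered subset. By hypothesis \(S\) has a supremum \(s : X\); I claim \(s\) is also the supremum of \(\alpha\). For each \(i : I\) we have \(\alpha(i) \in S\), so \(\alpha(i) \below s\). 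Conversely, if \(y\) is an upper bound of \(\alpha\), then for any \(x\) with \(S(x)\) holding, the proposition \(x \below y\) can be proved by eliminating the propositional truncation in \(\exists_{i : I}\,x = \alpha(i)\) to obtain some \(i\) with \(x = \alpha(i) \below y\); hence \(s \below y\) as well.

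For the converse, assume suprema of \(\V\)-indexed families exist, and let \((S,I,e)\) be a \(\V\)-covered subset. Applying \(\varphi\) gives the family \(\fst \circ e : I \to X\), whose supremum \(s\) exists by hypothesis. I would then show \(s\) is the supremum of \(S\): for any \(x \in S\), pick \((x,p) \in \totalspace(S)\) and use surjectivity of \(e\) (again eliminating the truncation into the proposition \(x \below s\)) to find \(i : I\) with \((\fst \circ e)(i) = x\), so \(x \below s\). In the other direction, any upper bound \(y\) of \(S\) bounds every \((\fst \circ e)(i)\), since \((\fst \circ e)(i) \in S\).

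The main obstacle is purely bookkeeping: both directions rely on the observation that \(\below\) is subsingleton-valued, which legitimises the elimination of the propositional truncations arising either from the existential definition of \(S\) in \(\psi\) or from the surjectivity of \(e\). Once this is in place, the argument is essentially routine, and the correspondence on suprema follows with no further input beyond Theorem~\ref{family-subset-equiv}.
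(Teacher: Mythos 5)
Your proposal is correct and takes essentially the same route the paper intends: the corollary is stated as an immediate consequence of \cref{family-subset-equiv}, with the verification that suprema correspond under the maps \(\varphi\) and \(\psi\) left implicit. Your explicit check of this correspondence, using that the order is subsingleton-valued to eliminate the propositional truncations from the existential in \(S\) and from the surjectivity of \(e\), is exactly the routine argument being omitted.
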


\paragraph*{Families and Subsets in the Presence of Impredicativity}
Finally, we compare our family-based approach to the
subset-based approach in the presence of impredicativity.

\begin{theorem}
  \label{impred-comparison}
  Assume \(\Omegaresizing{\T}{\U_0}\) for every universe \(\T\).
  Then the following are equivalent for a poset \(X\) in a
  universe \(\U\):
  \begin{enumerate}[(i)]
  \item \(X\) has suprema for all subsets;
  \item \(X\) has suprema for all \(\U\)-covered subsets;
  \item \(X\) has suprema for all subsets whose total spaces have size \(\U\);
  \item \(X\) has suprema for all families \(I \to X\) with \(I : \U\).
  \end{enumerate}
\end{theorem}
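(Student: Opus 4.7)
The plan is to exploit the hypothesis $\Omegaresizing{\T}{\U_0}$ for every $\T$: this gives $\Propresizing{\T}{\U_0}$ by the first proposition of \cref{sec:impred-and-em}, and hence every proposition in any universe has size~$\U$. The key preliminary observation I would establish is that under this hypothesis, every subset $S : X \to \Omega_\T$ has total space of size~$\U$. One produces an equivalent subset $\tilde S : X \to \Omega_\U$ by resizing each $S(x)$ to a $\U$-valued proposition, so that $\totalspace(\tilde S)$ lies in~$\U$ and is equivalent to $\totalspace(S)$. Moreover, upper bounds depend only on the predicate $S$ up to logical equivalence, so the suprema (when they exist) of $S$ and $\tilde S$ coincide.

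With this observation I would then close the cycle (i)~$\Rightarrow$~(ii)~$\Rightarrow$~(iii)~$\Rightarrow$~(i). The first implication is immediate, since a $\U$-covered subset is in particular a subset. For (ii)~$\Rightarrow$~(iii): if $\totalspace(S)$ has size~$\U$, then an equivalence $I \simeq \totalspace(S)$ with $I : \U$ is a surjection, exhibiting $S$ as $\U$-covered, so (ii) applies. For (iii)~$\Rightarrow$~(i): the preliminary observation ensures every subset has total space of size~$\U$, so (iii) applies universally.

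To incorporate~(iv), I would apply \cref{family-subset-sup-equiv} with $\V = \U$ (using $\U \sqcup \U = \U$): this yields that $X$ has suprema for all families $I \to X$ with $I : \U$ if and only if it has suprema for all $\U$-covered subsets $X \to \Omega_\U$. Suprema for $\U$-covered subsets in $\Omega_\U$ already suffice to recover (i): any subset $X \to \Omega_\T$ resizes to an equivalent subset in $\Omega_\U$ whose total space has size~$\U$, which by the argument above is $\U$-covered, hence has a supremum. Conversely, given~(i) and a family $\alpha : I \to X$ with $I : \U$, the subset $S(x) \colonequiv \exists_{i : I}\, x = \alpha(i)$ lies in $\Omega_\U$ (as $X$ is a set) and one checks directly that its supremum, provided by~(i), is a supremum of~$\alpha$.

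The only real obstacle is careful universe bookkeeping in the resizing step: one must verify that passing from $S$ to $\tilde S$ preserves existence (not just equivalence) of suprema, and correctly track that $\sum_{x:X}\tilde S(x)$ lies in~$\U$ because both $X : \U$ and $\tilde S(x) : \U$. Once this is settled, the three subset-based conditions collapse under the impredicativity assumption and the equivalence with~(iv) reduces to \cref{family-subset-sup-equiv}.
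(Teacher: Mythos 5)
Your proposal is correct and follows essentially the same route as the paper: the cycle (i) \(\Rightarrow\) (ii) \(\Rightarrow\) (iii) \(\Rightarrow\) (i), with the resizing hypothesis used in (iii) \(\Rightarrow\) (i) to show that every subset has total space of size \(\U\), and the equivalence of (ii) and (iv) obtained from \cref{family-subset-sup-equiv}. The additional steps you spell out---replacing \(S\) by an \(\Omega_{\U}\)-valued subset \(\tilde S\) and checking that suprema transfer along pointwise logical equivalence---are bookkeeping that the paper's proof leaves implicit, not a different argument.
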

\begin{proof}
  Clearly \(\textup{(i)} \Rightarrow \textup{(ii)} \Rightarrow
  \textup{(iii)}\). We show that (iii) implies (i), which proves the equivalence
  of (i)--(iii). Assume that \(X\) has suprema for all subsets whose total
  spaces have size \(\U\) and let \(S : X \to \Omega_{\T}\) be any subset of
  \(X\). Using \(\Omegaresizing{\T}{\U_0}\), the total space \(\totalspace(S)\)
  has size \(\U\). So~\(X\)~has a supremum for \(S\) by assumption, as
  desired. Finally, (ii) and (iv) are equivalent
  by~\cref{family-subset-sup-equiv}.
\end{proof}
Notice that (iv) in~\cref{impred-comparison} implies that \(X\) has suprema for
all families \(I \to X\) with \(I : \V\) and \(\V\) such that
\(\V\sqcup\U \equiv \U\). Typically, in the examples
of~\cite{deJongEscardo2021}~for~instance, \(\U \colonequiv \U_1\) and
\(\V \colonequiv \U_0\), so that \(\V\sqcup\U \equiv \U\) holds. Thus, our
\(\V\)-families-based approach generalizes the traditional subset-based
approach.

\section{Conclusion}
\label{sec:conclusion}
Firstly, we have shown, constructively and predicatively, that nontrivial dcpos,
bounded complete posets and sup-lattices are all necessarily large and
necessarily lack decidable equality.
We did so by deriving a weak
impredicativity axiom or weak excluded middle from the assumption that such
nontrivial structures are small or have decidable equality,
respectively. Strengthening nontriviality to the (classically equivalent)
positivity condition, we derived a strong impredicativity axiom and
full excluded middle.

Secondly, we proved that Zorn's lemma, Tarski's greatest fixed point theorem and
Pataraia's lemma all imply impredicativity axioms. Hence, these principles are
inherently impredicative and a predicative development of order theory (in
univalent foundations) must thus do without them.

Thirdly, we clarified, in our predicative setting, the relation between the
traditional definition of a lattice that requires completeness with respect to
subsets and our definition that asks for completeness with respect to small
families.

In future work, we wish to study the predicative validity of
Pataraia's theorem and Tarski's \emph{least} fixed point theorem.
Curi~\cite{Curi2015,Curi2018} develops predicative versions of Tarki's
fixed point theorem in some extensions of CZF. It is not clear whether
these arguments could be adapted to univalent foundations, because
they rely on the set-theoretical principles discussed in the
introduction. The availability of such fixed-point theorems would be
especially useful for application to inductive sets~\cite{Aczel1977},
which we might otherwise introduce in univalent foundations using
higher inductive types~\cite{HoTTBook}.
In another direction, we have developed a
notion of apartness~\cite{BridgesVita2011} for continuous
dcpos~\cite{deJongEscardo2021} that is related to the notion of being
strictly below introduced in this paper. Namely, if \(x \below y\) are
elements of a continuous dcpo, then \(x\) is strictly below \(y\) if
\(x\) is apart from \(y\). In upcoming work, we give a constructive
analysis of the Scott topology~\cite{GierzEtAl2003} using this notion
of apartness.

\bibliography{references}

\end{document}